\newtheorem{thm}{Theorem}[section]
\newtheorem{lem}[thm]{Lemma}
\newtheorem{prop}[thm]{Proposition}
\theoremstyle{definition}
\theoremstyle{remark}
\numberwithin{equation}{section}
\renewcommand{\Im}{\hbox{Im}\,}
\newcommand{\C}{\mathbb{C}}
\newcommand{\R}{\mathbb{R}}
\newcommand{\supp}{\operatorname{supp}}
\def\hat{\widehat}
\def\tilde{\widetilde}
\def \bfo {\begin {eqnarray*} }
\def \efo {\end {eqnarray*} }
\def \ba {\begin {eqnarray*} }
\def \ea {\end {eqnarray*} }
\def \beq {\begin {eqnarray}}
\def \eeq {\end {eqnarray}}
\def \supp {\hbox{supp }}
\def \det {\hbox{det}}
\def \p {\partial}
\def\hat{\widehat}
\def\tilde{\widetilde}
\def \bfo {\begin {eqnarray*} }
\def \efo {\end {eqnarray*} }
\def \ba {\begin {eqnarray*} }
\def \ea {\end {eqnarray*} }
\def \beq {\begin {eqnarray}}
\def \eeq {\end {eqnarray}}
\def \supp {\hbox{supp }}
\def \det {\hbox{det}}
\def \p {\partial}
\begin{document}

 \title[Determining electrical and heat transfer parameters]{Determining electrical and heat transfer parameters using coupled boundary measurements}

\author[Krupchyk]{Katsiaryna Krupchyk}

\address
        {K. Krupchyk, Department of Mathematics and Statistics \\
         University of Helsinki\\
         P.O. Box 68 \\
         FI-00014   Helsinki\\
         Finland}

\email{katya.krupchyk@helsinki.fi}

\author[Lassas]{Matti Lassas}

\address
        {M. Lassas, Department of Mathematics and Statistics \\
         University of Helsinki\\
         P.O. Box 68 \\
         FI-00014   Helsinki\\
         Finland}

\email{matti.lassas@helsinki.fi}

\author[Siltanen]{Samuli Siltanen}

\address
        {S. Siltanen, Department of Mathematics and Statistics \\
         University of Helsinki\\
         P.O. Box 68 \\
         FI-00014   Helsinki\\
         Finland}

\email{samuli.siltanen@helsinki.fi}

\maketitle

\begin{abstract} 
Let $\Omega\subset\R^n$, $n\ge 3$, be a smooth bounded domain and consider a coupled system in $\Omega$ consisting of a  
conductivity equation $\nabla \cdot \gamma(x) \nabla u(t,x)=0$ and an 
anisotropic heat equation 
$
\kappa^{-1}(x)\p_t\psi(t,x)=\nabla\cdot (A(x)\nabla \psi(t,x))+(\gamma\nabla u(t,x))\cdot \nabla u(t,x), \quad t\ge 0$.  
It is shown that the coefficients $\gamma$, $\kappa$ and $A=(a_{jk})$ are uniquely determined from the knowledge of the boundary map $u|_{\p \Omega}\mapsto \nu\cdot A\nabla \psi|_{\p \Omega}$, where $\nu$ is the unit outer normal to $\p \Omega$. 

The coupled system models the following physical phenomenon. Given a fixed voltage distribution, maintained on the boundary $\partial\Omega$,  an electric current distribution  appears inside $\Omega$. The current in turn acts as a source of heat inside $\Omega$, and the heat flows out of the body through the boundary.  The boundary measurements above then correspond to the map taking a voltage distribution on the boundary to the resulting heat flow through the boundary.  The presented mathematical results suggest a new hybrid diffuse imaging modality combining electrical prospecting and heat transfer-based probing. 

\end{abstract}

\bigskip 
\textbf{Keywords:} electrical impedance tomography, heat transfer, inverse problem, coupled systems

\textbf{AMS subject classification:} 35K20, 35J25, 35R30, 80A23

\section{Introduction}

\noindent

Let us model a  physical body by a bounded set $\Omega\subset\R^n$, $n\ge 3$, with smooth boundary $\partial\Omega$, and the following spatially varying quantities: heat capacity $c(x)$, density $\rho(x)$, electric conductivity $\gamma(x)$, and (possibly anisotropic) thermal conductivity $A(x)=(a_{jk}(x))$, each defined for $x\in\overline{\Omega}$.

Consider applying a spatially and temporally variable electrical voltage distribution $f(t,x)$ at the boundary $\partial\Omega$ starting at time $t=0$. Then, if there are no sinks or sources of current inside $\Omega$, the electric potential $u(t,x)$ inside the body satisfies the conductivity equation
\begin{equation}
\label{eq_conductivity_int}
\begin{aligned}
\nabla \cdot \gamma \nabla u(t,x)&=0 \qquad \mbox{ for } x\in\Omega\mbox{ and }t\geq 0,\\
u(t,\,\cdot\,)|_{\partial\Omega}&=f(t,\,\cdot\,). 
\end{aligned}
\end{equation}
Equation \eqref{eq_conductivity_int} is often used as a mathematical model for electrical impedance tomography (EIT), where one measures the current through the boundary caused by a family of static voltage distributions $f(t,x)=\phi(x)$ and recovers $\gamma(x)$ from such voltage-to-current map  $\Lambda_\gamma:\phi\mapsto \nu\cdot\nabla u|_{\partial\Omega}$.  Here $\nu$ is the unit outer normal to $\p \Omega$. 
We refer to \cite{Uhl2009} for an extensive survey of the mathematical developments in EIT.    See also \cite{AstPai_2006, AstPaiLassas_2005,  Nach_1996, Syl_1990, SylUhl1986}
for  results in the  two-dimensional case, 
and \cite{GreLasUhl2003, LasUhl01, LasTayUhl03, LeeUhl89,  Nach_1988, PaiPanUhl,   SylUhl1987}  for  results in higher dimensional cases. 
For counterexamples to uniqueness of time-harmonic inverse problems involving very anisotropic and degenerate material parameters, leading to the phenomenon of invisibility,  see 
 \cite{GreKurLasUhl2009, GreKurLasUhl2007, GreLasUhl2003MRL}.

Our aim here is somewhat different as we wish to couple {\em heat conduction} to the problem.
Let us denote the electrical power density inside $\Omega$ by $F$:
\begin{equation}
\label{eq_F_int}
F(t,x)=(\gamma\nabla u(t,x))\cdot \nabla u(t,x).
\end{equation}
Now $F$ acts as a source of heat inside $\Omega$. Assuming that the body is at a constant (zero) temperature at the time $t=0$ when the voltage is first applied, and the surface of the body is kept at that temperature at all times, the temperature distribution $\psi(t,x)$ inside $\Omega$ satisfies the following heat equation:
\begin{equation}\label{eq_heat_int}
\begin{aligned}
\kappa^{-1}(x)\p_t\psi(t,x)&=\nabla\cdot (A(x)\nabla \psi(t,x))+F(t,x) \quad \mbox{ for } x\in\Omega\mbox{ and }t\geq 0,\\
\psi|_{\overline{\R}_+\times \p \Omega}&=0,\quad 
\psi|_{t=0}=0,
\end{aligned}
\end{equation}
where $\kappa(x)=c(x)^{-1}\rho(x)^{-1}$. 
The model  \eqref{eq_conductivity_int}, \eqref{eq_F_int}, \eqref{eq_heat_int} is based on the physical assumption that the heat transfer is so slow that the quasistatic (DC) model for the electric potential \eqref{eq_conductivity_int} is realistic.

Associated to the coupled system \eqref{eq_conductivity_int}, \eqref{eq_F_int},  \eqref{eq_heat_int}, we 
introduce the voltage-to-heat flow map $\Sigma_{\gamma,\kappa,A}$ defined by
\begin{align}\label{boundarymap}
\Sigma_{\gamma,\kappa,A}: f&\mapsto \nu\cdot A\nabla \psi|_{\overline{\R}_+\times \p \Omega}.
\end{align}
The idea is to measure the heat flow through the boundary caused by the heat from the electric current resulting from the applied voltage distribution. 

Our main result is Theorem \ref{thm_main} below, stating that under certain smoothness assumptions,  the coefficients $\gamma$, $\kappa$, and $A$ are uniquely determined from the knowledge of the voltage-to-heat flow map $\Sigma_{\gamma,\kappa,A}$. 

The method of proof of Theorem \ref{thm_main} also outlines a constructive reconstruction procedure for recovering conductivity $\gamma$ from $\Sigma_{\gamma,\kappa,A}$. Namely, it turns out that applying a temporally static voltage distribution $f(t,x)=\phi(x)$ and studying $\Sigma_{\gamma,\kappa,A}f$ at thermal equilibrium ($t\rightarrow\infty$) yields the knowledge of the Dirichlet-to-Neumann map $\Lambda_\gamma\phi$ related to the EIT problem. Then one can recover $\gamma$ using Nachman's reconstruction result \cite{Nach_1988}.

Notice that various hybrid imaging methods have been proposed and analyzed recently. Examples include thermoacoustic and photoacoustic imaging \cite{ABCTF,  BalUhl_2010, KuKu}, combination of electrical and magnetic probing \cite{MREIT,NTT}, electrical and acoustic imaging \cite{GebSch2008} and magnetic and acoustic imaging \cite{MaHe,Ammari2}. Theorem \ref{thm_main} suggests a new hybrid imaging method, utilizing two diffuse modes of propagation: electrical prospecting and heat transfer-based probing. We emphasize that the proposed method recovers complementary information about three different physical properties. We also note that in many applications where one wants to reconstruct the heat transfer parameters $\kappa(x)$ and $A(x)$, the use of electric boundary sources may be easier than controlling the temperature or the heat flux at the boundary.  Concerning inverse problems for the heat equation, we refer to \cite{BukhKlib_1981, Canuto_Kavian_2001,  ImanYam_1998,  Isak_1999, KKL_book, KKLM_2004, Klib_1992, Yamom2009}.

We remark that in practice one might use a measurement setup shown in Figure \ref{fig:electrodes}. However, analysis of such discrete measurements is outside the scope of this paper, and in the mathematical results below we work with the continuum models \eqref{eq_conductivity_int}, \eqref{eq_F_int},   \eqref{eq_heat_int}, and \eqref{boundarymap}.
\begin{figure}
\begin{picture}(300,130)
\epsfxsize=4cm
\put(-10,5){\epsffile{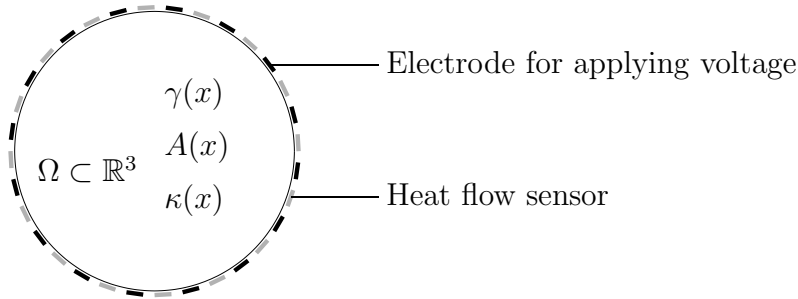}}
\put(2,50){$\Omega\subset \R^3$}
\put(50,80){$\gamma(x)$}
\put(50,60){$A(x)$}
\put(50,40){$\kappa(x)$}
\put(90,94){\line(1,0){41}}
\put(134,91){Electrode for applying voltage}
\put(98,44){\line(1,0){33}}
\put(134,41){Heat flow sensor}
\end{picture}
\caption{\label{fig:electrodes}Schematic illustration of the practical measurement setup motivating the proposed hybrid imaging method. 
The ideal voltage-to-heat flow map $\Sigma_{\gamma,\kappa,A}$ may be approximated in practice by maintaining fixed voltages at the electrodes while measuring heat flow using the interlaced sensors.}
\end{figure}

This paper is organized as follows. In Section \ref{sec:statement} we state our assumptions and results in a mathematically precise form. In Section \ref{sec_auxiliary} we give an auxiliary density result for the conductivity equation. Section \ref{sec:EIT} is devoted to  the reconstruction of the conductivity $\gamma$. The proof of Theorem   \ref{thm_main} is completed in Section \ref{sec_A_kappa}, where we show the identifiability of the heat parameters $\kappa$ and $A$. Finally, Appendix A  is devoted to the recovery of the boundary values of the matrix $A$ from interior--to--boundary measurements, associated to a suitable elliptic boundary value problem. 
This result may be of an independent interest.

\section{Statement of results}\label{sec:statement}

Let $\Omega\subset\R^n$, $n\ge 3$, be a bounded domain with $C^\infty$ boundary. Let $\gamma\in C^\infty(\overline{\Omega})$ be a strictly positive function on $\overline{\Omega}$. 
Then given $f(t,x)\in C^1(\overline{\R}_+,H^{s}(\p \Omega))$, $s\ge 1/2$, on the boundary at time $t\ge 0$, there exists a unique $u\in C^1(\overline{\R}_+,H^{s+1/2}(\Omega))$, which solves the boundary value problem
\begin{equation}
\label{eq_conductivity}
\begin{aligned}
\nabla \cdot \gamma \nabla u(t,x)&=0 \qquad \mbox{ for } x\in\Omega\mbox{ and }t\geq 0,\\
u(t,\,\cdot\,)|_{\partial\Omega}&=f(t,\,\cdot\,), 
\end{aligned}
\end{equation}
see \cite{Grubbbook2009}. 
We have 
\begin{equation}
\label{F_main_text}
F(t,x)=(\gamma\nabla u(t,x))\cdot \nabla u(t,x)
\in  C^1(\overline{\R}_+,L^2(\Omega)),
\end{equation}
provided that $s$ is taken large enough, say $s>(n+1)/2$. In what follows we shall always choose the Sobolev index $s$ in this way. 
Consider the anisotropic heat equation
\begin{equation}\label{eq_heat}
\begin{aligned}
\kappa^{-1}(x)\p_t\psi(t,x)&=\nabla\cdot (A(x)\nabla \psi(t,x))+F(t,x) \quad \mbox{ for } x\in\Omega\mbox{ and }t\geq 0,\\
\psi|_{\overline{\R}_+\times \p \Omega}&=0,\quad 
\psi|_{t=0}=0.
\end{aligned}
\end{equation}
Here $A(x)=(a_{jk}(x))$ is 
a real symmetric $n\times n$ matrix with $a_{j,k}(x)\in C^\infty(\overline{\Omega})$, and  there exists $C_0>0$ such that 
\begin{equation}
\label{eq_A_positive}
\sum_{j,k=1}^n a_{jk}(x)\xi_j\xi_k\ge C_0|\xi|^2,\quad \textrm{for all } (x,\xi)\in \overline{\Omega}\times \R^n.
\end{equation}
 We shall assume that $0<\kappa\in C^\infty(\overline{\Omega})$. 
The operator 
\[
Pv=-\kappa(x)\nabla\cdot (A(x)\nabla v)
\]
is formally self-adjoint in $L^2_\kappa=L^2(\Omega,\kappa^{-1}dx)$ and we have
\[
(Pv,v)_{L^2_\kappa}\ge C_0\| v\|^2_{L^2},\quad v\in C^\infty_0(\Omega),\quad C_0>0.
\]
We also let $P$ denote the Friedrichs extension of the operator $P$ on $C^\infty_0(\Omega)$, so that the domain of the positive self-adjoint operator $P$ is $(H^1_0\cap H^2)(\Omega)$. 

The solution of \eqref{eq_heat} is given by the Duhamel formula
\begin{equation}
\label{eq_duhamel}
\psi(t,x)=\int_0^t (e^{-(t-s)P}\kappa F)(s,x)ds\in C^1(\overline{\R}_+,L^2(\Omega))\cap C(\overline{\R}_+,(H^1_0\cap H^2)(\Omega)),
\end{equation}
see \cite{Grubbbook2009}. 

Associated to the coupled system \eqref{eq_conductivity},  \eqref{F_main_text}, and \eqref{eq_heat}, we 
consider the voltage-to-heat flow map,
\begin{align*}
\Sigma_{\gamma,\kappa,A}: C^1(\overline{\R}_+,H^s(\p \Omega))&\to C(\overline{\R}_+,H^{1/2}(\p \Omega)),\\
 f&\mapsto \nu\cdot A\nabla \psi|_{\overline{\R}_+\times \p \Omega},
\end{align*}
The main result of the paper is as follows.

\begin{thm}
\label{thm_main}

 Assume that $0<\gamma_j\in C^\infty(\overline{\Omega})$, $0<\kappa_j\in C^\infty(\overline{\Omega})$, and $A_j$ are real symmetric $n\times n$ matrices with $C^\infty(\overline{\Omega})$ entries, satisfying \eqref{eq_A_positive}, for $j=1,2$. 
If $\Sigma_{\gamma_1,\kappa_1,A_1}=\Sigma_{\gamma_2,\kappa_2,A_2}$, then $\gamma_1=\gamma_2$, $\kappa_1=\kappa_2$ and $A_1=A_2$. 
\end{thm}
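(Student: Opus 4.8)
We would follow the section structure of the paper: first recover the conductivity $\gamma$, then the heat parameters $\kappa$ and $A$.

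\textbf{Step 1 (recovery of $\gamma$).} Apply a temporally static voltage $f(t,x)=\phi(x)$. Then $u$ and hence $F$ are time-independent, and the Duhamel formula \eqref{eq_duhamel} gives $\psi(t,\cdot)=\psi_\infty-e^{-tP}\psi_\infty$, where $\psi_\infty=P^{-1}(\kappa F)\in (H^1_0\cap H^2)(\Omega)$ solves $-\nabla\cdot(A\nabla\psi_\infty)=F$ in $\Omega$, $\psi_\infty|_{\p\Omega}=0$. Since $P$ is positive and self-adjoint, $e^{-tP}\psi_\infty\to0$ in $(H^1_0\cap H^2)(\Omega)$, and by continuity of the normal trace, $\Sigma_{\gamma,\kappa,A}f(t,\cdot)\to \nu\cdot A\nabla\psi_\infty|_{\p\Omega}$ in $H^{1/2}(\p\Omega)$ as $t\to\infty$. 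Integrating over $\p\Omega$, the divergence theorem eliminates the dependence on $A$:
\[
\int_{\p\Omega}\nu\cdot A\nabla\psi_\infty\,dS=\int_\Omega F\,dx=\int_\Omega(\gamma\nabla u)\cdot\nabla u\,dx=\langle \Lambda_\gamma\phi,\phi\rangle_{L^2(\p\Omega)}.
\]
Thus $\Sigma_{\gamma,\kappa,A}$ determines the quadratic form of the EIT Dirichlet-to-Neumann map $\Lambda_\gamma$, and polarizing over $\phi_1,\phi_2,\phi_1+\phi_2$ recovers $\Lambda_\gamma$ itself. From $\Sigma_{\gamma_1,\kappa_1,A_1}=\Sigma_{\gamma_2,\kappa_2,A_2}$ we obtain $\Lambda_{\gamma_1}=\Lambda_{\gamma_2}$, and Nachman's reconstruction theorem \cite{Nach_1988} (valid for $n\ge3$ and smooth conductivities) gives $\gamma_1=\gamma_2=:\gamma$.

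\textbf{Step 2 (reduction to an interior source problem).} With $\gamma$ known, the same static voltage produces the same $u$ and $F$ for both systems. For $\phi=\phi_1+\phi_2$ the source splits as $F_{\phi_1}+F_{\phi_2}+2(\gamma\nabla u_{\phi_1})\cdot\nabla u_{\phi_2}$, and since the source-to-solution map of \eqref{eq_heat} is linear, $\Sigma(\phi_1+\phi_2)-\Sigma\phi_1-\Sigma\phi_2$ equals twice the boundary heat flux produced by the interior source $(\gamma\nabla u_{\phi_1})\cdot\nabla u_{\phi_2}$. By the auxiliary density result of \secref{sec_auxiliary}, such products span a dense subspace of $L^2(\Omega)$; moreover, by \eqref{eq_duhamel} the map $G\mapsto \nu\cdot A\nabla\psi^G|_{\overline{\R}_+\times\p\Omega}$ (with $\psi^G$ the solution of \eqref{eq_heat} with source $G$ and zero initial/boundary data) is bounded from $L^2(\Omega)$ to $C(\overline{\R}_+,H^{1/2}(\p\Omega))$, so $\Sigma$ determines it for \emph{every} $G\in L^2(\Omega)$, for both pairs $(\kappa_j,A_j)$.

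\textbf{Step 3 (Laplace transform and recovery of $\kappa$, $A$).} Taking the Laplace transform in $t$, for each $\lambda>0$ the function $w=\widehat{\psi^G}(\lambda,\cdot)$ solves $(\kappa^{-1}\lambda-\nabla\cdot A\nabla)w=\lambda^{-1}G$ in $\Omega$, $w|_{\p\Omega}=0$, so $\Sigma$ determines the interior-to-boundary map $R_\lambda\colon G\mapsto \nu\cdot A\nabla w|_{\p\Omega}$. A Green's identity gives, for the $(\kappa^{-1}\lambda-\nabla\cdot A\nabla)$-harmonic extension $v_h$ of $h$, $\int_\Omega G\,v_h\,dx=-\lambda\int_{\p\Omega}(R_\lambda G)\,h\,dS$; letting $G$ run over $L^2(\Omega)$ recovers $v_h$ throughout $\Omega$, i.e. the full Poisson operator of $(\kappa^{-1}\lambda-\nabla\cdot A\nabla)$ — which is strictly richer than its Dirichlet-to-Neumann map and, in particular, is not subject to the diffeomorphism gauge obstruction of the anisotropic Calder\'on problem, since a boundary-fixing diffeomorphism that preserves all interior solution values must be the identity. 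Equality of Poisson operators for two admissible pairs means they share the same solution space, so $-\nabla\cdot((A_2-A_1)\nabla v)+(\kappa_2^{-1}-\kappa_1^{-1})\lambda\,v=0$ for all $v$ in it; pairing with a second such function $w$ and using that Appendix~A yields $A_1=A_2$ on $\p\Omega$, the boundary term drops and
\[
\int_\Omega\bigl[(A_2-A_1)\nabla v\cdot\nabla w+(\kappa_2^{-1}-\kappa_1^{-1})\lambda\, vw\bigr]dx=0 .
\]
Sending $\lambda\to0$ and invoking density of the products $\{\nabla v\otimes\nabla w\}$ forces $A_1=A_2$, and then the remaining term with density of $\{vw\}$ forces $\kappa_1=\kappa_2$, completing the proof.

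\textbf{Main obstacle.} The heart of the argument is the density material of \secref{sec_auxiliary} together with the boundary determination of the anisotropic matrix $A$ in Appendix~A: one must show that the physically constrained sources $(\gamma\nabla u_1)\cdot\nabla u_2$, and later the products of solutions of the $\lambda$-shifted equation, are dense in the right topology, and that the interior-to-boundary map pins down $A$ (and its jets) on $\p\Omega$. Once these are in place, the parabolic-to-elliptic reduction via the Laplace transform, the passage from $R_\lambda$ to the Poisson operator, and the use of interior solution values to sidestep the anisotropy gauge are comparatively routine; the recovery of $\gamma$ via the equilibrium limit and Nachman's theorem is essentially a bookkeeping step on top of known EIT theory.
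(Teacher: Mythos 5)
Your Step 1 coincides with the paper's Section \ref{sec:EIT} (static voltage, the $t\to\infty$ equilibrium limit, the divergence theorem to eliminate $A$ and $\kappa$, and the Sylvester--Uhlmann/Nachman result), and your Step 2 reproduces the paper's use of polarization together with Proposition \ref{prop_density_gradients} to pass from the physically realizable sources $\gamma\nabla u_{\phi_1}\cdot\nabla u_{\phi_2}$ to the full interior-source-to-boundary-flux map. The divergence is in Step 3, and that is where there is a genuine gap. After the Laplace transform you reduce matters to the identity $\int_\Omega\bigl[(A_2-A_1)\nabla v\cdot\nabla w+(\kappa_2^{-1}-\kappa_1^{-1})\lambda\, vw\bigr]dx=0$ for solutions $v,w$ of $(\kappa^{-1}\lambda-\nabla\cdot A\nabla)v=0$, and then invoke ``density of the products $\{\nabla v\otimes\nabla w\}$'' and ``density of $\{vw\}$''. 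Neither statement is available: Proposition \ref{prop_density_gradients} concerns the \emph{isotropic} equation $\nabla\cdot\gamma\nabla w=0$ and rests on Sylvester--Uhlmann complex geometrical optics solutions, which are not at hand for a general smooth anisotropic $A$ when $n\ge 3$. The matrix-valued density needed to kill $A_2-A_1$ is essentially the injectivity of the linearized anisotropic Calder\'on problem, and the density of $\{vw\}$ is the completeness of products of solutions for $-\nabla\cdot A\nabla$; both are open in this generality, so the final two sentences of your argument do not go through as written. (The limit $\lambda\to 0$ also needs care, since the admissible $v,w$ change with $\lambda$.)

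The paper avoids this entirely by a Borg--Levinson-type spectral argument: concentrating the source at $t=0$ turns the boundary flux into a Dirichlet series $\sum_k e^{-\lambda_k t}d_k\,(\nu\cdot A\nabla\varphi_k)|_{\p\Omega}$, whose uniqueness matches the eigenvalues and the eigenprojection boundary data (Proposition \ref{prop_almost_spec}, Lemmas \ref{lem_alg_1} and \ref{lem_alg_2}); Appendix A (Proposition \ref{prop_A-boundary}) gives $A_1=A_2$ on $\p\Omega$, which makes the intertwining matrix $T$ orthogonal, so the operators $P_1,P_2$ share an eigenbasis; comparing the Fourier coefficients of $\kappa_1$ and $\kappa_2$ in that basis with respect to the two weights $\kappa_j^{-1}dx$ then yields $\kappa_1=\kappa_2$ and finally $A_1=A_2$, using only unique continuation. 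I note that your intermediate conclusion is actually \emph{stronger} than the integral identity: equality of the full Poisson operators says that every common solution $v$ satisfies $\nabla\cdot\bigl((A_2-A_1)\nabla v\bigr)=(\kappa_2^{-1}-\kappa_1^{-1})\lambda v$ pointwise in $\Omega$, and one could hope to exploit this with oscillatory quasimodes to force $(A_2-A_1)\xi\cdot\xi=0$ on the characteristic set of $A_1$, and with a positive solution to kill $\kappa_2^{-1}-\kappa_1^{-1}$; but that is a different, nontrivial argument which you have not supplied, so as it stands Step 3 needs to be replaced (for instance by the paper's spectral route).
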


It turns out that in the course of the proof of Theorem \ref{thm_main}, we establish a result for the anisotropic heat equation, which may be of independent interest.  In order to state the result, consider the inhomogeneous initial boundary value problem 
\eqref{eq_heat} for the anisotropic heat equation with an arbitrary source $F\in C^1(\overline{\R}_+,L^2(\Omega))$. 
Define the map,
\begin{align*}
\Xi_{\kappa,A}: C^1(\overline{\R}_+,L^2(\Omega))&\to C(\overline{\R}_+,H^{1/2}(\p \Omega)),\\
 F&\mapsto \nu\cdot A\nabla \psi^F|_{\overline{\R}_+\times \p \Omega},
\end{align*}
where $\psi^F$ is the solution of \eqref{eq_heat}.

\begin{thm}
\label{thm_main_2} Let $\Omega\subset \R^n$, $n\ge 2$, be a bounded domain with $C^\infty$ boundary.  Assume that  $0<\kappa_j\in C^\infty(\overline{\Omega})$, and $A_j$ are real symmetric $n\times n$ matrices with $C^\infty(\overline{\Omega})$ entries, satisfying \eqref{eq_A_positive}, for $j=1,2$.
If $\Xi_{\kappa_1,A_1}=\Xi_{\kappa_2,A_2}$, then  $\kappa_1=\kappa_2$ and $A_1=A_2$. 
\end{thm}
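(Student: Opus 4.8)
The plan is to use the time variable to reduce the inverse problem for the parabolic system to a one--parameter family of inverse problems for elliptic boundary value problems, and then to exploit that $\Xi_{\kappa,A}$ determines far more than a Dirichlet--to--Neumann map — it recovers \emph{all interior solutions} of those elliptic problems — which bypasses the (open) anisotropic Calder\'on problem. \textbf{Step 1 (reduction to elliptic source--to--flux maps).} By \eqref{eq_A_positive} the spectra of $P_1$ and $P_2$ both lie in $[C_0',\infty)$ for some $C_0'>0$. For real $\lambda<C_0'$ — in particular $\lambda=0$ and small $\lambda<0$ — insert the admissible source $F(t,x)=e^{-\lambda t}g(x)$, $g\in L^2(\Omega)$, into the Duhamel formula \eqref{eq_duhamel}; a direct computation gives $\psi^F(t)=e^{-\lambda t}(P-\lambda)^{-1}\bigl(I-e^{-t(P-\lambda)}\bigr)\kappa g$, and since the spectrum of $P-\lambda$ is bounded away from zero, $e^{\lambda t}\psi^F(t)\to(P-\lambda)^{-1}\kappa g$ in $H^2(\Omega)$ as $t\to\infty$. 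Taking the conormal trace, $e^{\lambda t}\,\Xi_{\kappa,A}F(t)\to\nu\cdot A\nabla v|_{\p\Omega}$ in $H^{1/2}(\p\Omega)$, where $v$ solves $L_\lambda v:=-\nabla\cdot(A\nabla v)-\lambda\kappa^{-1}v=g$ in $\Omega$, $v|_{\p\Omega}=0$. Hence $\Xi_{\kappa,A}$ determines, for every such $\lambda$, the source--to--flux map $\Phi_\lambda\colon g\mapsto\nu\cdot A\nabla v|_{\p\Omega}$ (one may analytically continue $\lambda$ through the common resolvent set, but the ray $\lambda<C_0'$ suffices below).

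\textbf{Step 2 (interior solutions and boundary values).} Green's identity for $L_\lambda$ gives $\langle\Phi_\lambda g,h\rangle_{\p\Omega}=-\langle g,w^\lambda_h\rangle_{L^2(\Omega)}$ for all $g\in L^2(\Omega)$ and $h\in H^{1/2}(\p\Omega)$, where $w^\lambda_h$ solves $L_\lambda w=0$, $w|_{\p\Omega}=h$. Since $g$ runs over all of $L^2(\Omega)$, $\Phi_\lambda$ is equivalent to the Poisson operator $K_\lambda\colon h\mapsto w^\lambda_h$, viewed as a map into $L^2(\Omega)$; equivalently, one knows the whole space $\mathcal S_\lambda$ of $H^1$-solutions of $L_\lambda w=0$ as functions of $x$. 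Separately, $\Phi_0$ is fed into the boundary--determination result of Appendix~A to recover the boundary jet of $A$, of which I will only use $A_1|_{\p\Omega}=A_2|_{\p\Omega}$. This last input is essential, because $\Xi_{\kappa,A}$ — and likewise each $\Phi_\lambda$ — is invariant under the gauge $(\kappa,A)\mapsto(\kappa/c,cA)$ with $c>0$ constant (it fixes $P$, scales the source to $\kappa g/c$, the solution to $\psi^F/c$, and leaves the flux $\nu\cdot A\nabla\psi^F$ untouched); the boundary values of $A$ are precisely what pins down that scalar.

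\textbf{Step 3 (recovering $A$, then $\kappa$).} At $\lambda=0$, the $H^1$-solution spaces of $\nabla\cdot(A_1\nabla\cdot)=0$ and $\nabla\cdot(A_2\nabla\cdot)=0$ coincide; call the common space $\mathcal S$ and put $B=A_1-A_2$, so $\nabla\cdot(B\nabla w)=0$ in $\Omega$ for all $w\in\mathcal S$. Fix $x_0\in\Omega$; comparing the test quadratic $\tfrac12(x-x_0)^\top H(x-x_0)$ with the solution having its boundary trace on a small ball, and then upgrading from local to global solutions by Runge approximation for $\nabla\cdot(A_1\nabla\cdot)=0$, one sees the $1$- and $2$-jets at $x_0$ of members of $\mathcal S$ are dense in the hyperplane cut out by that equation. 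Matching $\nabla\cdot(B\nabla w)=0$ against this family forces the second--order coefficient $B(x_0)$ and the first--order coefficients of $\nabla\cdot(B\nabla\cdot)$ at $x_0$ to be a common scalar multiple $t(x_0)$ of those of $\nabla\cdot(A_1\nabla\cdot)$, i.e. $A_2=(1-t)A_1$; reinserting this and using that gradients of members of $\mathcal S$ span $\R^n$ at every point gives $\nabla t\equiv 0$, so $t$ is constant, and then $A_1|_{\p\Omega}=A_2|_{\p\Omega}$ forces $t=0$, i.e. $A_1=A_2=:A$. Finally, applying Step 2 at a small $\lambda<0$: the solution spaces of $-\nabla\cdot(A\nabla\cdot)-\lambda\kappa_1^{-1}$ and $-\nabla\cdot(A\nabla\cdot)-\lambda\kappa_2^{-1}$ coincide; the solution with boundary value $1$ is strictly positive by the maximum principle and belongs to both, so subtracting the equations gives $\lambda(\kappa_1^{-1}-\kappa_2^{-1})w=0$, hence $\kappa_1=\kappa_2$.

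\textbf{The hard part.} The semigroup limits in Step 1 are routine once the spectral gap $P\ge C_0'>0$ is in place. The real difficulty is concentrated in Steps 2--3: proving the boundary--determination statement of Appendix~A for a source--to--flux map (a Kohn--Vogelius--type argument with sources that concentrate at boundary points), and making precise the density of $1$- and $2$-jets of solutions to the anisotropic equation $\nabla\cdot(A_1\nabla\cdot)=0$ via Runge approximation — all the while being careful that the scaling gauge is broken only once the boundary values of $A$ are known. I expect the boundary--determination step (the appendix) to be the most delicate ingredient.
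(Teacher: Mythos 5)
Your route is genuinely different from the paper's. The paper proves Theorem \ref{thm_main_2} by rerunning the argument of Section \ref{sec_A_kappa} with sources $F(t,x)=\chi_\varepsilon^2(t)H(x)$, $H\in L^2(\Omega)$, concentrating at $t=0$: letting $\varepsilon\to 0$ produces the Dirichlet series $\sum_k e^{-\lambda_k t}(H,\varphi_k)_{L^2}\,(\nu\cdot A\nabla\varphi_k)|_{\p\Omega}$, uniqueness of Dirichlet series extracts the eigenvalues and the boundary spectral data $\sum_i\varphi_{k,i}\otimes(\nu\cdot A\nabla\varphi_{k,i})|_{\p\Omega}$ (the density result of Proposition \ref{prop_density_gradients} being unnecessary here since $H$ is arbitrary), the algebraic Lemma \ref{lem_alg_2} together with the boundary determination of $A$ from Appendix A shows the transfer matrix $T$ is orthogonal, whence a common orthonormal eigenbasis, and $\kappa$ is read off from the Fourier coefficients of $\kappa_1,\kappa_2$ in that basis. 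You instead take $t\to+\infty$ with sources $e^{-\lambda t}g$, recover the elliptic source--to--flux maps $\Phi_\lambda$ for $\lambda\le 0$, convert them by Green's identity into knowledge of \emph{all interior solutions} of $L_\lambda w=0$, and identify the coefficients from the coincidence of the solution spaces. Your Steps 1 and 2 are correct as written, and the Step 3 scheme (density of $1$- and $2$-jets of solutions in the hyperplane cut out by the equation, via local quadratic quasi-solutions plus Runge approximation and interior elliptic estimates) is standard though not carried out; that is the technical price you pay for avoiding the Dirichlet-series and eigenfunction bookkeeping, while your approach dispenses entirely with the spectral decomposition.

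The genuine problem lies in the one ingredient you share with the paper: breaking the scaling gauge via Appendix A. You correctly observed that $\Xi_{\kappa/c,\,cA}=\Xi_{\kappa,A}$ for every constant $c>0$, and that some additional input must therefore pin down the constant. But the map $\Phi_0=\Psi$ that you (and the paper) feed into Proposition \ref{prop_A-boundary} has exactly the same invariance: if $-\nabla\cdot(cA\nabla v)=F$ with $v|_{\p\Omega}=0$, then $v=u^F/c$ and $\nu\cdot(cA)\nabla v|_{\p\Omega}=\nu\cdot A\nabla u^F|_{\p\Omega}$, so $\Psi_{cA}=\Psi_{A}$. Consequently Proposition \ref{prop_A-boundary} cannot determine $A|_{\p\Omega}$ itself, only at best up to a positive constant (indeed the quantity $e^{-y_n|\xi'|_A}$ recovered in its proof is written in boundary normal coordinates of the unknown metric $A^{-1}$ and is itself invariant under $A\mapsto cA$), and your gauge is never broken. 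What your argument actually establishes is $(\kappa_2,A_2)=(\kappa_1/c,\,cA_1)$ for some constant $c>0$, and the scaling invariance of $\Xi_{\kappa,A}$ shows this is the best possible conclusion from these data. This gap is not specific to your approach --- the paper's proof passes through the same step --- but you should not present the boundary-determination input as closing it.
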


\section{An auxiliary density result}\label{sec_auxiliary}

Let $\Omega\subset\R^n$, $n\ge 3$, be a bounded domain with $C^\infty$ boundary and   let $\gamma\in C^\infty(\overline{\Omega})$ be a strictly positive function on $\overline{\Omega}$.   
We shall need the following density result, which is a quite  straightforward consequence  of \cite{SylUhl1987}. 
\begin{prop}
\label{prop_density_gradients}
The set 
\[
\emph{\textrm{span}}\{\gamma\nabla w_1\cdot\nabla w_2: w_j\in C^\infty(\overline{\Omega}), \nabla\cdot\gamma \nabla w_j=0, j=1,2\} 
\]
is dense in $L^2(\Omega)$. 
\end{prop}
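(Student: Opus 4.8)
The plan is to reduce the statement to the classical density result of Sylvester–Uhlmann, which says that products $u_1 u_2$ of solutions of a Schrödinger equation $(-\Delta + q)u_j = 0$ are dense in $L^1$ (hence in particular their span is dense in $L^2$ after the usual substitution). First I would perform the standard Liouville-type change of variables: writing $\gamma = m^2$ with $m = \gamma^{1/2} \in C^\infty(\overline{\Omega})$ strictly positive, a function $w$ solves $\nabla\cdot\gamma\nabla w = 0$ if and only if $v = m w$ solves $(-\Delta + q)v = 0$ with $q = \Delta m / m = \Delta(\gamma^{1/2})/\gamma^{1/2}$. Under this correspondence one computes
\begin{equation*}
\gamma\nabla w_1\cdot\nabla w_2 = m^2 \nabla(m^{-1}v_1)\cdot\nabla(m^{-1}v_2) = \nabla v_1\cdot\nabla v_2 - \frac{\nabla m}{m}\cdot(v_1\nabla v_2 + v_2\nabla v_1) + \frac{|\nabla m|^2}{m^2}v_1 v_2.
\end{equation*}

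Next I would integrate against an arbitrary test function $\varphi\in C^\infty_0(\Omega)$ to recast everything in terms of $v_1 v_2$. The first two groups of terms, after integration by parts (and using $-\Delta v_j = -q v_j$), produce expressions of the form $\int (\text{smooth coefficient}) \, v_1 v_2$; concretely, $\int \nabla v_1\cdot\nabla v_2\, \varphi = -\tfrac12\int v_1 v_2\,\Delta\varphi + \int q\, v_1 v_2\, \varphi$ after symmetrizing, and the cross terms likewise collapse onto $\int (\cdots) v_1 v_2$. So testing $\gamma\nabla w_1\cdot\nabla w_2$ against $\varphi$ equals testing $v_1 v_2$ against some other function $\widetilde\varphi$ built from $\varphi$, $m$ and $q$ and their derivatives. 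Then I would invoke the Sylvester–Uhlmann density of $\mathrm{span}\{v_1 v_2 : (-\Delta+q)v_j = 0\}$ in $L^2(\Omega)$ (their construction uses complex geometrical optics solutions $v_j = e^{\zeta_j\cdot x}(1+r_j)$ with $\zeta_j\cdot\zeta_j = 0$, $\zeta_1 + \zeta_2$ a prescribed real vector, and $\|r_j\|\to 0$): if $\varphi\perp$ every $\gamma\nabla w_1\cdot\nabla w_2$, then $\widetilde\varphi\perp$ every $v_1 v_2$, forcing $\widetilde\varphi = 0$, and unwinding the (invertible, up to the harmless smooth factors) relation between $\varphi$ and $\widetilde\varphi$ gives $\varphi = 0$. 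Since $C^\infty_0(\Omega)$ is dense in $L^2(\Omega)$, this proves the claim.

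The only slightly delicate point is bookkeeping: one must check that the map $\varphi\mapsto\widetilde\varphi$ is such that $\widetilde\varphi = 0$ implies $\varphi = 0$ — this is immediate here because the top-order part is $-\tfrac12\Delta\varphi$ plus lower order, and more simply because the substitution $w\leftrightarrow v=mw$ is a bijection with smooth nonvanishing weight, so density of one span is equivalent to density of the other after multiplying the pairing by $m^{-2}$. Indeed a cleaner route avoids the identity above entirely: the linear isomorphism $v\mapsto m^{-1}v$ carries the solution space of $(-\Delta+q)v=0$ onto that of $\nabla\cdot\gamma\nabla w = 0$, and carries $v_1 v_2$ to $w_1 w_2$; combined with the Sylvester–Uhlmann result one first gets density of $\mathrm{span}\{w_1 w_2\}$, and then density of the gradients follows because the CGO solutions have gradients $\nabla w_j = \zeta_j' e^{\zeta_j'\cdot x}(1 + o(1))$ dominating pointwise, so $\gamma\nabla w_1\cdot\nabla w_2 = (\gamma\,\zeta_1'\cdot\zeta_2')\, w_1 w_2(1+o(1))$ and one can arrange $\gamma\,\zeta_1'\cdot\zeta_2'$ to be any prescribed nonzero constant along the family — the error terms being controlled in $L^2$ exactly as in \cite{SylUhl1987}. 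I expect the main obstacle to be nothing deep but rather making the $o(1)$ remainder estimates for the gradient products uniform enough to conclude $L^2$-density; this is routine given the remainder bounds already in \cite{SylUhl1987}, which is why the proposition is described as a straightforward consequence of that work.
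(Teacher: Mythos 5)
Your first route is, at bottom, the paper's own argument in a different gauge: you rewrite $\gamma\nabla w_1\cdot\nabla w_2$ as a second-order operator applied to a product of solutions (via $v_j=\gamma^{1/2}w_j$ and $\nabla v_1\cdot\nabla v_2=\tfrac12\Delta(v_1v_2)-qv_1v_2$, where the paper instead uses the equivalent identity $\gamma\nabla w_1\cdot\nabla w_2=\tfrac12\nabla\cdot(\gamma\nabla(w_1w_2))$), then feed in the Sylvester--Uhlmann solutions. But as written your duality step has a genuine gap: to prove density in $L^2(\Omega)$ you must show that an \emph{arbitrary} $f\in L^2(\Omega)$ orthogonal to the span vanishes, whereas you only test against $\varphi\in C^\infty_0(\Omega)$. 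Knowing that the orthogonal complement meets $C^\infty_0(\Omega)$ trivially does not give density: the closed hyperplane of mean-zero functions already defeats this logic, since its orthogonal complement consists of the constants, which meet $C^\infty_0(\Omega)$ only in $0$. Your integration by parts onto $\varphi$ is precisely what forces the restriction to test functions. The repair is to keep all derivatives on the smooth product $v_1v_2$ (no integration by parts is needed), pass to the CGO limit to obtain $\int_\Omega p(x,\xi)e^{i\xi\cdot x}f\,dx=0$ for all $\xi\in\R^n$ with $p$ a quadratic polynomial in $\xi$ having smooth coefficients, read this as the statement that a second-order elliptic operator annihilates the compactly supported distribution $\chi_\Omega f$ on $\R^n$, and conclude $f=0$ by unique continuation. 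That last step is also where your word ``immediate'' hides real content: what is needed is the unique continuation principle for compactly supported solutions of a second-order elliptic equation, not the invertibility of multiplication by $m^{-2}$, and the map $\varphi\mapsto\widetilde\varphi$ is a differential operator, not a smooth weight.

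Your proposed ``cleaner route'' does not work. Because $\rho_1+\rho_2=\xi$ must be held fixed while $|\rho_j|\to\infty$ (this is what lets $\xi$ range over all real frequencies), the product $\zeta_1'\cdot\zeta_2'=-\rho_1\cdot\rho_2=-\tfrac12|\xi|^2$ stays \emph{bounded}, so the term you call the leading one does not dominate: the cross terms involving $\rho_j\cdot\nabla(\gamma^{-1/2})$ and $\rho_j\cdot\nabla r_k$ contribute at the same order (the former pair combines to $i\xi\cdot\nabla(\gamma^{-1/2})$, the latter is $O(|\rho_1|/|\rho_2|)=O(1)$). The actual $L^2$-limit of $\gamma\nabla w_{\rho_1}\cdot\nabla w_{\rho_2}$ is $\tfrac12\bigl(\nabla\cdot(\gamma\nabla\gamma^{-1})+i\xi\cdot\gamma\nabla\gamma^{-1}-|\xi|^2\bigr)e^{i\xi\cdot x}$, a \emph{variable-coefficient} multiple of $e^{i\xi\cdot x}$, so the closed span is not directly seen to contain the exponentials; one is forced back to the elliptic equation for $\chi_\Omega f$ and unique continuation. (Note also that $\gamma\,\zeta_1'\cdot\zeta_2'$ cannot be ``a prescribed nonzero constant,'' since $\gamma$ depends on $x$.) Letting $|\rho_1\cdot\rho_2|\to\infty$ to make your leading term dominate would sacrifice the freedom to prescribe $\rho_1+\rho_2$ as an arbitrary fixed real vector, so there is no shortcut around the lower-order terms here.
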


\begin{proof}
Let $f\in L^2(\Omega)$ be  such that
\begin{equation}
\label{eq_density}
\int_{\Omega}f\gamma \nabla w_1\cdot\nabla w_2 dx=0,
\end{equation}
for any solution $w_1, w_2\in C^\infty(\overline{\Omega})$ of the conductivity equation
\begin{equation}
\label{eq_cond}
\nabla\cdot\gamma \nabla w=0.
\end{equation}
We have
\begin{equation}
\label{eq_iden}
\gamma\nabla w_1\cdot \nabla w_2=\frac{1}{2}\nabla \cdot(\gamma \nabla(w_1w_2)). 
\end{equation}
It follows from \cite{SylUhl1987}, see also \cite{BalUhl_2010},
that for any $\rho\in \C^n$ satisfying $\rho\cdot\rho=0$ and $|\rho|\ge 1$ large enough, the conductivity equation \eqref{eq_cond} has a solution
\begin{equation}
 \label{eq_complex_geom_optics}
w_{\rho}(x)=e^{i\rho\cdot x}\gamma^{-1/2}(1+r_\rho),
\end{equation}
 where $r_\rho\in C^\infty(\overline{\Omega})$ satisfies
 \begin{equation}
 \label{eq_complex_geom_optics_2}
  \|r_\rho\|_{H^m(\Omega)}\le \frac{C_m}{|\rho|},\quad m\ge 0.
 \end{equation}
 Here the constant $C_m$ depends on $\Omega$, $n$, and a finite number of derivatives of $\gamma$. 
 
 Given  $\xi\in \R^n$ and $R>0$, according to \cite{SylUhl1987}, there exist $\rho_1,\rho_2\in\C^n$ such that $\rho_j\cdot\rho_j=0$, $ \rho_1+\rho_2=\xi$ and $|\rho_j|\ge R$,  $j=1,2$.

 For the solutions $w_{\rho_1}$ and $w_{\rho_2}$ of the form \eqref{eq_complex_geom_optics}, we get
\begin{align*}
\nabla \cdot(\gamma &\nabla(w_{\rho_1}w_{\rho_2}))=\nabla \cdot(\gamma \nabla(\gamma^{-1}e^{i\xi\cdot x}(1+r_{\rho_1}+r_{\rho_2}+r_{\rho_1}r_{\rho_2})))\\
&=(\nabla\cdot (\gamma\nabla \gamma^{-1})+\gamma(\nabla \gamma^{-1})\cdot i\xi-|\xi|^2)e^{i\xi\cdot x}(1+r_{\rho_1}+r_{\rho_2}+r_{\rho_1}r_{\rho_2})\\
&+ (\gamma\nabla \gamma^{-1}+2i\xi)e^{i\xi\cdot x}\cdot (\nabla r_{\rho_1}+\nabla r_{\rho_2}+\nabla(r_{\rho_1}r_{\rho_2}))\\
&+ e^{i\xi\cdot x} (\Delta r_{\rho_1}+\Delta r_{\rho_2}+\Delta (r_{\rho_1}r_{\rho_2})).
\end{align*}
In view of \eqref{eq_iden}, we may substitute the latter expression into 
\eqref{eq_density} and let $R\to \infty$. Using 
\eqref{eq_complex_geom_optics_2}, we obtain that 
\begin{equation}
\label{eq_density_1}
\int_\Omega (\nabla\cdot (\gamma\nabla \gamma^{-1})+\gamma(\nabla \gamma^{-1})\cdot i\xi-|\xi|^2)e^{i\xi\cdot x} fdx=0\quad \textrm{for all } \xi\in\R^n. 
\end{equation}
Here we shall view 
$\gamma$ as a strictly positive $C^\infty$ function on $\R^n$, which is  equal to a positive constant near infinity. 
The identity \eqref{eq_density_1} is equivalent to
\[
\mathcal{F}_{x\to \xi}(\nabla\cdot (\gamma\nabla \gamma^{-1}) \chi_{\Omega}f -\nabla\cdot(\gamma \nabla(\gamma^{-1})\chi_{\Omega}f) +\Delta( \chi_\Omega f) )=0,
\]
where $\mathcal{F}_{x\to \xi}$ denotes the Fourier transformation and $\chi_\Omega$ is the characteristic function of $\Omega$. 
It follows that $\chi_\Omega f$ is a solution of a second order elliptic equation on $\R^n$ with smooth coefficients. Since it is compactly supported, by unique continuation we conclude that $f\equiv 0$ in $\Omega$. This completes the proof. 
\end{proof}

\section{Recovering the conductivity $\gamma$ from the voltage-to-heat flow map}\label{sec:EIT}

The purpose of this section is to make the first step in the proof of Theorem \ref{thm_main}, by establishing the following result. 
Recall that here $n\ge 3$. 

\begin{prop}
\label{prop_cond}
The voltage-to-heat flow map $\Sigma_{\gamma,\kappa,A}$ determines the conductivity  $\gamma$ uniquely. 
\end{prop}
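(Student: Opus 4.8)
The plan is to show that the voltage--to--heat flow map $\Sigma_{\gamma,\kappa,A}$ determines the Dirichlet--to--Neumann map $\Lambda_\gamma\colon \phi\mapsto (\nu\cdot\gamma\nabla u_\phi)|_{\p\Omega}$ of the conductivity operator, where $u_\phi$ is the solution of $\nabla\cdot\gamma\nabla u_\phi=0$ in $\Omega$ with $u_\phi|_{\p\Omega}=\phi$. Once this is done, $\gamma$ is recovered from $\Lambda_\gamma$ by the uniqueness theorem of Sylvester and Uhlmann \cite{SylUhl1987}, and in fact constructively by Nachman's reconstruction procedure \cite{Nach_1988}. In contrast with Section~\ref{sec_A_kappa}, the density result of Proposition~\ref{prop_density_gradients} is not needed here.

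The first step is to probe the system with temporally static voltages $f(t,x)=\phi(x)$, $\phi\in H^s(\p\Omega)$ with $s>(n+1)/2$. By uniqueness for \eqref{eq_conductivity} the potential $u(t,x)=u_\phi(x)$ is then time--independent, hence so is the source $F(t,x)=F_\phi(x)=(\gamma\nabla u_\phi)\cdot\nabla u_\phi\in L^2(\Omega)$, and the Duhamel formula \eqref{eq_duhamel} becomes $\psi(t)=\int_0^t e^{-\sigma P}(\kappa F_\phi)\,d\sigma$. Since \eqref{eq_A_positive} yields $(Pv,v)_{L^2_\kappa}\ge C_0\|v\|_{L^2}^2$ with $C_0>0$, the spectrum of the positive self--adjoint operator $P$ is contained in $[C_0,\infty)$, so $\|Pe^{-\sigma P}\|_{L^2\to L^2}\le\sup_{\lambda\ge C_0}\lambda e^{-\sigma\lambda}=O(e^{-C_0\sigma})$ as $\sigma\to\infty$. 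Hence the Duhamel integral converges, in the graph norm of $P$ and therefore in $(H^1_0\cap H^2)(\Omega)$, to the stationary solution $\psi_\infty:=P^{-1}(\kappa F_\phi)$, which is the unique solution of $-\nabla\cdot(A\nabla\psi_\infty)=F_\phi$ in $\Omega$ with $\psi_\infty|_{\p\Omega}=0$. In particular the conormal traces converge in $H^{1/2}(\p\Omega)$, so $\Sigma_{\gamma,\kappa,A}$ determines, for every such $\phi$, the equilibrium heat flux $g_\phi:=(\nu\cdot A\nabla\psi_\infty)|_{\p\Omega}=\lim_{t\to\infty}(\Sigma_{\gamma,\kappa,A}\phi)(t,\cdot)$.

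The second step is to integrate $g_\phi$ over $\p\Omega$. The divergence theorem applied to the field $A\nabla\psi_\infty$, together with the equation for $\psi_\infty$, gives $\int_{\p\Omega}g_\phi\,dS=\int_\Omega\nabla\cdot(A\nabla\psi_\infty)\,dx=-\int_\Omega F_\phi\,dx$, while Green's formula for the conductivity equation (using $\nabla\cdot\gamma\nabla u_\phi=0$ and $u_\phi|_{\p\Omega}=\phi$) gives $\int_\Omega F_\phi\,dx=\int_\Omega(\gamma\nabla u_\phi)\cdot\nabla u_\phi\,dx=\int_{\p\Omega}\phi\,\Lambda_\gamma\phi\,dS$. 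Therefore $\int_{\p\Omega}\phi\,\Lambda_\gamma\phi\,dS=-\int_{\p\Omega}g_\phi\,dS$, and the right--hand side is computable from $\Sigma_{\gamma,\kappa,A}$. Thus $\Sigma_{\gamma,\kappa,A}$ determines the quadratic form $\phi\mapsto\langle\phi,\Lambda_\gamma\phi\rangle$ on $H^s(\p\Omega)$; since $H^s(\p\Omega)$ is dense in $H^{1/2}(\p\Omega)$ and $\Lambda_\gamma\colon H^{1/2}(\p\Omega)\to H^{-1/2}(\p\Omega)$ is bounded and symmetric, polarization recovers its bilinear form, hence $\Lambda_\gamma$ itself. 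Applying \cite{SylUhl1987} (respectively \cite{Nach_1988}) then recovers $\gamma$, as claimed.

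The only genuinely delicate point is the justification of the limit $t\to\infty$ in a topology strong enough to support the conormal trace $\nu\cdot A\nabla\psi$; this is precisely where the ellipticity \eqref{eq_A_positive}, equivalently the spectral gap of $P$, enters, and one may alternatively appeal to the large--time parabolic estimates in \cite{Grubbbook2009}. Everything else reduces to integrations by parts and the citation of known EIT uniqueness (and reconstruction) results.
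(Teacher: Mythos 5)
Your proposal is correct and follows essentially the same route as the paper: pass to the stationary limit $\psi(t)\to P^{-1}(\kappa F_\phi)$ in the graph norm of $P$, integrate the equilibrium heat flux over $\p\Omega$ to read off the quadratic form $\phi\mapsto\int_{\p\Omega}\phi\,\Lambda_\gamma\phi\,dS$, and invoke \cite{SylUhl1987}. The only cosmetic difference is that the paper switches on the static voltage with a temporal cutoff $\alpha(t)h(x)$ rather than using $f(t,x)=\phi(x)$ from $t=0$; this does not affect the argument, and your explicit mention of polarization to pass from the quadratic form to $\Lambda_\gamma$ is a point the paper leaves implicit.
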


When proving Proposition \ref{prop_cond}, we 
let $\alpha\in C^\infty(\overline{\R}_+; [0,1])$ be such that 
\[
\alpha|_{t<1/2}=0,\quad \alpha|_{t>1}=1.
\]
Then set $f(t,x)=\alpha(t)h(x)$ with $h\in H^s(\p \Omega)$, $s$ large enough. Using the Duhamel formula \eqref{eq_duhamel}, we shall study the behavior of $\psi(t,x)$ as $t\to +\infty$.
The solution $u$ of \eqref{eq_conductivity} satisfies
\[
u(t,x)=\alpha(t)w_0(x),
\]
where $w_0$ solves 
\begin{equation}
\label{eq_conductivity_w_0}
\begin{aligned}
\nabla \cdot \gamma \nabla w_0(x)&=0 \quad \textrm{in}\quad \Omega,\\
w_0|_{\p\Omega}&=h(x). 
\end{aligned}
\end{equation}
Thus,
\[
F(t,x)=\gamma\alpha^2(t)\nabla w_0\cdot \nabla w_0,
\]
and  \eqref{eq_duhamel} gives
\begin{equation}
\label{eq_psi_1}
\psi(t,x)=\int_0^t \alpha^2(s)e^{-(t-s)P}\gamma\kappa\nabla w_0\cdot \nabla w_0ds.
\end{equation}
\begin{lem}
\label{lem_convergence_psi}
We have
\[
\|\psi(t,\cdot)-P^{-1}(\gamma\kappa\nabla w_0\cdot \nabla w_0)\|_{\mathcal{D}(P)}\to 0, \text{ as }  t\to+\infty.
\]
Here  $\mathcal{D}(P)=(H^2\cap H^1_0)(\Omega)$ is equipped with the graph norm
$\|\psi\|_{\mathcal{D}(P)}=\|\psi\|_{L^2(\Omega)}+\|P\psi\|_{L^2(\Omega)}$.
\end{lem}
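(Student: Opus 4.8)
The plan is to exploit the spectral representation of $e^{-tP}$ together with the fact that $P$ is a positive, self-adjoint operator with compact resolvent (since $\mathcal{D}(P)=(H^2\cap H^1_0)(\Omega)$ embeds compactly in $L^2(\Omega)$), so its spectrum consists of eigenvalues $0<\lambda_1\le\lambda_2\le\cdots\to\infty$. Write $g=\gamma\kappa\nabla w_0\cdot\nabla w_0\in L^2(\Omega)$; note $g$ is indeed in $L^2$ because $w_0\in H^{s+1/2}(\Omega)$ with $s>(n+1)/2$, as recorded in \eqref{F_main_text}. The key identity, obtained by changing variables $s\mapsto t-s$ in \eqref{eq_psi_1}, is
\[
\psi(t,\cdot)=\int_0^t \alpha^2(t-s)\,e^{-sP}g\,ds,
\]
and since $\alpha\equiv 1$ for large argument, for $t\ge 1$ we can split this as
\[
\psi(t,\cdot)=\int_0^{t}e^{-sP}g\,ds-\int_{t-1}^{t}\bigl(1-\alpha^2(t-s)\bigr)e^{-sP}g\,ds.
\]
The first integral equals $P^{-1}(I-e^{-tP})g$, which converges to $P^{-1}g$; the second is a small remainder because $e^{-sP}$ is exponentially decaying for $s\ge t-1$.

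Concretely, I would estimate each term in the graph norm. For the first term, $P^{-1}(I-e^{-tP})g-P^{-1}g=-P^{-1}e^{-tP}g$, and applying $P$ gives $-e^{-tP}g$; by the spectral theorem $\|e^{-tP}g\|_{L^2}^2=\sum_j e^{-2t\lambda_j}|\hat g_j|^2\to 0$ by dominated convergence, and similarly $\|P^{-1}e^{-tP}g\|_{L^2}^2=\sum_j\lambda_j^{-2}e^{-2t\lambda_j}|\hat g_j|^2\to 0$. For the remainder term $R(t):=\int_{t-1}^t(1-\alpha^2(t-s))e^{-sP}g\,ds$, bound $0\le 1-\alpha^2\le 1$ and use $\|e^{-sP}g\|_{L^2}\le e^{-s\lambda_1}\|g\|_{L^2}$ to get $\|R(t)\|_{L^2}\le e^{-(t-1)\lambda_1}\|g\|_{L^2}\to 0$; for $\|PR(t)\|_{L^2}$, use $\|Pe^{-sP}g\|_{L^2}\le (C/s)\|g\|_{L^2}$ combined with $\|Pe^{-sP}g\|_{L^2}\le\|Pe^{-(s/2)P}\|_{L^2\to L^2}\,\|e^{-(s/2)P}g\|_{L^2}\le (2C/s)e^{-(s/2)\lambda_1}\|g\|_{L^2}$, which again decays exponentially as $t\to\infty$ on the interval $s\in[t-1,t]$. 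Summing the two contributions yields the claim.

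I do not expect a serious obstacle here; the statement is essentially a quantitative version of "the inhomogeneous heat semigroup relaxes to equilibrium," and the only points requiring minor care are (i) justifying that $g\in L^2(\Omega)$ so that $P^{-1}g\in\mathcal{D}(P)$ and the Duhamel integral makes sense in $\mathcal{D}(P)$, (ii) the change of variables and the split at the cutoff, and (iii) controlling $\|Pe^{-sP}g\|_{L^2}$ uniformly down to small $s$ via the standard analytic-semigroup bound $\|Pe^{-sP}\|\le C/s$ together with the spectral gap $\lambda_1>0$. If one prefers to avoid the $C/s$ bound near $s=0$, one can instead note that $\alpha^2$ is smooth and supported away from $0$, integrate by parts in $s$, or simply observe $(1-\alpha^2(t-s))$ is supported in $s\ge t-1/2$ when one uses $\alpha|_{t>1}=1$... in any case the remainder lives on an interval receding to infinity, where every relevant norm of $e^{-sP}g$ is exponentially small. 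The mild technical part is therefore item (iii), but it is classical.
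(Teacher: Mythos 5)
Your proof is correct and follows essentially the same route as the paper: both split the Duhamel integral into a main piece equal to $\int_0^{t}e^{-sP}g\,ds=P^{-1}(I-e^{-tP})g$, which tends to $P^{-1}g$ in graph norm, plus a remainder supported on a time interval receding to infinity, killed by the spectral-theorem bounds on $e^{-sP}$ and $Pe^{-sP}$ using the spectral gap $\lambda_1>0$. The only cosmetic difference is where you perform the change of variables; your worry about the $C/s$ bound near $s=0$ is, as you note yourself, moot since the remainder lives on $[t-1,t]$.
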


\begin{proof} 
We shall first check that 
\begin{equation}
\label{eq_step1}
\|\psi(t,\cdot)-P^{-1}(\gamma\kappa\nabla w_0\cdot \nabla w_0)\|_{L^2(\Omega)}\to 0,\text{ as } t\to+\infty.
\end{equation}
It follows from \eqref{eq_psi_1} that for $t>1$, we have 
\[
\psi(t,x)=\int_0^1 \alpha^2(s)e^{-(t-s)P}\gamma\kappa\nabla w_0\cdot \nabla w_0ds+ 
\int_0^{t-1} e^{-sP}\gamma\kappa\nabla w_0\cdot \nabla w_0ds. 
\]
Using that 
\[
\int_0^{+\infty} e^{-tP}dt=P^{-1},
\]
in the sense of bounded operators on $L^2(\Omega)$, see \cite{Grubbbook2009}, we get with $L^2$-convergence, as $t\to +\infty$,
\[
\int_0^{t-1} e^{-sP}\gamma\kappa\nabla w_0\cdot \nabla w_0ds\to P^{-1}(\gamma\kappa\nabla w_0\cdot \nabla w_0).
\]
On the other hand, 
\begin{align*}
\|\int_0^1 &\alpha^2(s)e^{-(t-s)P}\gamma\kappa\nabla w_0\cdot \nabla w_0ds\|_{L^2(\Omega)}\\
&\le 
\int_0^1 \alpha^2(s)\|e^{-(t-s)P}\gamma\kappa\nabla w_0\cdot \nabla w_0\|_{L^2(\Omega)}ds\\
&\le \int_0^1 \alpha^2(s)\|e^{-(t-s)P}\|_{\mathcal{L}(L^2(\Omega),L^2(\Omega))}\|\gamma\kappa\nabla w_0\cdot \nabla w_0\|_{L^2(\Omega)}ds\to 0, \text{ as } t\to +\infty,
\end{align*}
since an application of the spectral theorem shows that 
\[
\|e^{-(t-s)P}\|_{\mathcal{L}(L^2(\Omega),L^2(\Omega))}\le \sup_{\lambda\in \textrm{spec}(P)}e^{-\lambda(t-s)}\to 0,  \text{ as } t\to+\infty.
\]
 This establishes \eqref{eq_step1}. 

Next we shall show that
\begin{equation}
\label{eq_step2}
\|P\psi(t,\cdot)-\gamma\kappa\nabla w_0\cdot \nabla w_0\|_{L^2(\Omega)}\to 0, \text{ as } t\to+\infty.
\end{equation}
We have for $t>1$, 
\[
P\psi(t,x)=P\int_0^1 \alpha^2(s)e^{-(t-s)P}\gamma\kappa\nabla w_0\cdot \nabla w_0ds+ 
P\int_0^{t-1} e^{-sP}\gamma\kappa\nabla w_0\cdot \nabla w_0ds. 
\]
The formula
\[
P\int_0^t e^{-sP}vds=-e^{-tP}v+v,
\]
see \cite{Grubbbook2009}, implies that
\[
P\int_0^{t-1} e^{-sP}\gamma\kappa\nabla w_0\cdot \nabla w_0ds=-e^{-(t-1)P}\gamma\kappa\nabla w_0\cdot \nabla w_0+ \gamma\kappa\nabla w_0\cdot \nabla w_0,
\]
and 
\[
\|e^{-(t-1)P}\gamma\kappa\nabla w_0\cdot \nabla w_0\|_{L^2(\Omega)}\to 0, \text{ as }  t\to+\infty. 
\]
Finally,
\begin{align*}
\|&P\int_0^1 \alpha^2(s)e^{-(t-s)P}\gamma\kappa\nabla w_0\cdot \nabla w_0ds\|_{L^2(\Omega)}\\
&\le 
\int_0^1 \alpha^2(s)\|Pe^{-(t-s)P}\|_{\mathcal{L}(L^2(\Omega),L^2(\Omega))}\|\gamma\kappa\nabla w_0\cdot \nabla w_0\|_{L^2(\Omega)}ds\to 0, \text{ as } t\to +\infty,
\end{align*}
since
\[
\|Pe^{-(t-s)P}\|_{\mathcal{L}(L^2(\Omega),L^2(\Omega))}\le \sup_{\lambda\in\textrm{spec}(P)}( \lambda e^{-(t-s)\lambda})\to 0, \text{ as } t\to +\infty.
\]
This proves \eqref{eq_step2} and completes the proof of the lemma. 
\end{proof}

Lemma \ref{lem_convergence_psi} implies that as $t\to +\infty$, 
\[
\nu \cdot A\nabla \psi|_{\p\Omega}\to \nu \cdot A \nabla (P^{-1}(\gamma\kappa\nabla w_0\cdot \nabla w_0))|_{\p\Omega}\quad\textrm{in}\ H^{1/2}(\p \Omega).
\]
 Thus, as $t\to +\infty$, we have by a repeated application of the divergence theorem together with the Cauchy-Schwarz inequality,  
 \begin{align*}
 \int_{\p \Omega} \Sigma_{\gamma,\kappa,A}(\alpha(t)h(x))dS=\int_{\p \Omega} \nu\cdot A\nabla \psi dS\to \int_{\p \Omega}\nu\cdot A \nabla (P^{-1}(\gamma\kappa\nabla w_0\cdot \nabla w_0))dS\\
 =\int_{\Omega}\nabla \cdot A \nabla (P^{-1}(\gamma\kappa\nabla w_0\cdot \nabla w_0))dx=-\int_{\Omega} \gamma\nabla w_0\cdot \nabla w_0dx=
 -\int_{\p \Omega} \Lambda_\gamma (h)hdS.
 \end{align*}
Here 
\begin{align*}
\Lambda_\gamma:\ & H^s(\p \Omega)\to H^{s-1}(\p \Omega),\\
&h\mapsto \gamma\p_{\nu} w_0|_{\p \Omega},
\end{align*}
is the Dirichlet-to-Neumann map, associated to the problem \eqref{eq_conductivity_w_0}. Thus, the knowledge of the voltage-to-heat flow map $\Sigma_{\gamma,\kappa,A}$ determines the  Dirichlet-to-Neumann map $\Lambda_\gamma$. It follows from \cite{SylUhl1987} that the isotropic conductivity $\gamma\in C^\infty(\overline\Omega)$ is uniquely determined by $\Sigma_{\gamma,\kappa,A}$.  This completes the proof of Proposition \ref{prop_cond}.

\section{Recovering the heat parameters $A$ and $\kappa$ }\label{sec_A_kappa}

The purpose of this section is to prove  the following result.

\begin{prop}
If $\Sigma_{\gamma,\kappa_1,A_1}=\Sigma_{\gamma,\kappa_2,A_2}$, then $\kappa_1=\kappa_2$ and $A_1=A_2$.
\end{prop}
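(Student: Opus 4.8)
The plan is to deduce this proposition from Theorem~\ref{thm_main_2}: I will show that the voltage-to-heat flow map $\Sigma_{\gamma,\kappa,A}$ determines the source-to-heat flow map $\Xi_{\kappa,A}$. Once this is done the conclusion is immediate, for since the conductivity $\gamma$ is the same on the two sides, both $\Xi_{\kappa_1,A_1}$ and $\Xi_{\kappa_2,A_2}$ are determined by the common map $\Sigma_{\gamma,\kappa_j,A_j}$, so $\Xi_{\kappa_1,A_1}=\Xi_{\kappa_2,A_2}$, and Theorem~\ref{thm_main_2} gives $\kappa_1=\kappa_2$ and $A_1=A_2$. The mechanism behind the reduction is that, although in the coupled system the admissible sources are restricted to the special form $F=(\gamma\nabla u)\cdot\nabla u$ with $u$ solving \eqref{eq_conductivity}, this dependence is \emph{quadratic} in the boundary data $f$, so polarization together with the density result of Proposition~\ref{prop_density_gradients} produces enough sources.

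Concretely, $f\mapsto u$ is linear, $u\mapsto F=(\gamma\nabla u)\cdot\nabla u$ is a bounded quadratic map into $C^1(\overline{\R}_+,L^2(\Omega))$ (for $s>(n+1)/2$), and $F\mapsto\nu\cdot A\nabla\psi^F|_{\p\Omega}=\Xi_{\kappa,A}F$ is linear; hence $\Sigma_{\gamma,\kappa,A}$ is a bounded quadratic map of $f$, whose polarization is the symmetric bilinear form
\[
\widetilde B(f_1,f_2):=\tfrac14\bigl(\Sigma_{\gamma,\kappa,A}(f_1+f_2)-\Sigma_{\gamma,\kappa,A}(f_1-f_2)\bigr)=\Xi_{\kappa,A}\bigl(\gamma\nabla u_1\cdot\nabla u_2\bigr),
\]
where $u_j$ solves \eqref{eq_conductivity} with data $f_j$. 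Thus $\widetilde B$ is determined by $\Sigma_{\gamma,\kappa,A}$. Taking $f_j(t,x)=\beta_j(t)h_j(x)$ with $\beta_j\in C^1(\overline{\R}_+)$ and $h_j\in C^\infty(\p\Omega)$ gives $u_j=\beta_j(t)w_{0,j}(x)$, hence $\widetilde B(f_1,f_2)=\Xi_{\kappa,A}\bigl(\beta_1(t)\beta_2(t)\,\gamma\nabla w_{0,1}\cdot\nabla w_{0,2}\bigr)$. Choosing $\beta_2\equiv1$ and using the linearity of $\Xi_{\kappa,A}$ in its argument, we recover $\Xi_{\kappa,A}$ applied to every finite sum $\sum_k\beta_k(t)g_k(x)$ with $\beta_k\in C^1(\overline{\R}_+)$ and $g_k$ in the span of $\{\gamma\nabla w_1\cdot\nabla w_2:w_j\in C^\infty(\overline{\Omega}),\ \nabla\cdot\gamma\nabla w_j=0\}$. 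By Proposition~\ref{prop_density_gradients} and a standard approximation in the $t$ variable, such sources are dense in $C^1(\overline{\R}_+,L^2(\Omega))$; moreover $\Xi_{\kappa,A}$ is continuous on this space, since the Duhamel formula \eqref{eq_duhamel} and one integration by parts in $s$ give $\|P\psi^F(t)\|_{L^2}\le\|\kappa F(t)\|_{L^2}+\|\kappa F(0)\|_{L^2}+\int_0^t\|\kappa\p_sF(s)\|_{L^2}\,ds$ and hence $\|\Xi_{\kappa,A}F(t)\|_{H^{1/2}(\p\Omega)}\le C_T\bigl(\|F\|_{C([0,T],L^2)}+\|\p_tF\|_{C([0,T],L^2)}\bigr)$ on each $[0,T]$. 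Therefore $\Sigma_{\gamma,\kappa,A}$ determines $\Xi_{\kappa,A}$, and the proposition follows from Theorem~\ref{thm_main_2}.

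The substantive work is hidden in Theorem~\ref{thm_main_2}, and this is where I expect the main obstacle to lie. The route I would take is to feed in exponential sources $F(t,x)=e^{-\mu t}g(x)$ with $\mu<0$ (so that $\mu$ lies in the resolvent set of the positive operator $P=-\kappa\nabla\cdot(A\nabla\,\cdot\,)$) and read off the $t\to+\infty$ asymptotics by an argument analogous to Lemma~\ref{lem_convergence_psi}; this shows that $\Xi_{\kappa,A}$ determines, for every such $\mu$ and every $g\in L^2(\Omega)$, the boundary quantity $\nu\cdot A\nabla v|_{\p\Omega}$, where $v$ solves $(P-\mu)v=\kappa g$ in $\Omega$ with $v|_{\p\Omega}=0$. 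Pairing this, via Green's formula, with solutions $w$ of the homogeneous equation $(P-\mu)w=0$, $w|_{\p\Omega}=\phi$, shows that $\int_\Omega g\,w\,dx$ is known for all $g$, i.e.\ the \emph{interior} Poisson operator $\phi\mapsto w$ — not merely the Dirichlet-to-Neumann map — is determined. Comparing two coefficient pairs, equality of these interior Poisson operators means the common extension $w$ of $\phi$ solves both equations, so $\nabla\cdot\bigl((A_1-A_2)\nabla w\bigr)=-\mu(\kappa_1^{-1}-\kappa_2^{-1})w$ for every solution $w$ of $(P_1-\mu)w=0$. A Runge-type approximation argument, producing solutions with prescribed first and second jets at interior points, then forces $A_2=\lambda A_1$ with $\lambda$ constant; the recovery of $A|_{\p\Omega}$ from interior-to-boundary data (the content of Appendix~A) gives $\lambda\equiv1$, hence $A_1=A_2$; and with $A_1=A_2$ the displayed relation (with $\mu\neq0$) yields $\kappa_1=\kappa_2$. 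The delicate points are the passage from the boundary map to the interior Poisson operators, the Runge approximation with controlled jets, and the boundary determination; by comparison, the reduction of $\Sigma_{\gamma,\kappa,A}$ to $\Xi_{\kappa,A}$ above is essentially soft.
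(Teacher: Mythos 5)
Your soft reduction is correct: the polarization identity $\tfrac14\bigl(\Sigma_{\gamma,\kappa,A}(f_1+f_2)-\Sigma_{\gamma,\kappa,A}(f_1-f_2)\bigr)=\Xi_{\kappa,A}(\gamma\nabla u_1\cdot\nabla u_2)$, the density of spans of $\gamma\nabla w_1\cdot\nabla w_2$ from Proposition~\ref{prop_density_gradients}, and the continuity estimate for $\Xi_{\kappa,A}$ obtained by integrating by parts in the Duhamel formula are all sound, and together they do show that $\Sigma_{\gamma,\kappa,A}$ determines $\Xi_{\kappa,A}$. This is in fact the same polarization the paper uses (with boundary data $h\pm\tilde h$). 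But the logical organization is inverted relative to the paper: the paper proves the present proposition \emph{directly} and then remarks that Theorem~\ref{thm_main_2} follows "by exactly the same arguments" applied to sources $\chi_\varepsilon^2(t)H(x)$. So you cannot simply cite Theorem~\ref{thm_main_2} here without circularity; your argument stands or falls with your independent proof of it, which is where the gap lies.

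That independent proof is only a sketch, and the two hard steps are asserted rather than established: (i) the extraction of the resolvent boundary data $\nu\cdot A\nabla(P-\mu)^{-1}(\kappa g)|_{\p\Omega}$ from the large-time behaviour of $\Xi_{\kappa,A}(e^{-\mu t}g)$ needs an argument in the spirit of Lemma~\ref{lem_convergence_psi} (convergence in the graph norm of $P$, so that the conormal trace converges), and (ii) the Runge-type approximation producing solutions of $(P_1-\mu)w=0$ with arbitrarily prescribed $2$-jets at interior points — from which you deduce $A_2=\lambda A_1$ and then $\nabla\lambda=0$ — requires unique continuation plus interior elliptic estimates and is a nontrivial lemma in its own right; neither is in the paper. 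For contrast, the paper's route is spectral: it drives the source to a Dirac mass in time via $\chi_\varepsilon^2(t)\to\delta(t)$, expands $\psi$ in Dirichlet eigenfunctions of $P_j$, uses uniqueness of Dirichlet series to equate the eigenvalues and the boundary data $\sum_i d_{k,i}^{(j)}(\nu\cdot A_j\nabla\varphi_{k,i}^{(j)})|_{\p\Omega}$ eigenvalue by eigenvalue, then an algebraic lemma (Lemma~\ref{lem_alg_2}) plus the boundary determination of $A$ (Proposition~\ref{prop_A-boundary}) to show the two eigenbases coincide up to an orthogonal transformation, and finally compares the Fourier coefficients of $\kappa_1$ and $\kappa_2$ in the common eigenbasis to conclude $\kappa_1=\kappa_2$ and hence $A_1=A_2$. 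Your resolvent/Poisson-operator approach is a genuinely different and plausible alternative, but as written it leaves the substantive analysis — precisely the part the paper actually carries out — unproved.
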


When determining the conductivity $\gamma$ in the previous section, we were concerned with the power densities $F$ which are supported in the region $t\ge 1/2$ and independent of $t$ near $+\infty$. Here we shall instead concentrate the density $F$ in a small neighborhood of $t=0$.

Let $\chi\in C^\infty(\overline{\R}_+)$ be such that $0\le \chi\le 1$, $\supp(\chi)\subset [0,1]$ and 
\[
\int\chi^2(t)dt=1.
\]
For $\varepsilon>0$, we define
\begin{equation}
\label{eq_chi_epsilon}
\chi_\varepsilon(t)=\varepsilon^{-1/2}\chi(t/\varepsilon),
\end{equation}
so that $\chi_\varepsilon^2(t)\to\delta(t)$, the Dirac measure at $t=0$, as $\varepsilon\to 0$. 
Let $h\in H^s(\p \Omega)$, for $s$ large enough. Then the solution $u=u^{\varepsilon,h}$ of the problem
\begin{align*}
\nabla \cdot \gamma \nabla u(t,x)&=0 \quad \textrm{in}\quad \Omega,\\
u|_{\p\Omega}&=\chi_\varepsilon(t)h(x),
\end{align*}
satisfies 
$u^{\varepsilon,h}(t,x)=\chi_\varepsilon(t)w^h(x)$, where
$w^h(x)$ solves 
\begin{equation}
\label{eq_w_h}
\begin{aligned}
\nabla \cdot \gamma \nabla w^h(x)&=0 \quad \textrm{in}\quad \Omega,\\
w^h|_{\p\Omega}&=h(x).
\end{aligned}
\end{equation}
Let $h,\tilde h\in H^s(\p \Omega)$ and
\[
F^{\varepsilon, h\pm \tilde h}=\gamma \nabla (u^{\varepsilon,h}\pm u^{\varepsilon,\tilde h})\cdot \nabla (u^{\varepsilon,h}\pm u^{\varepsilon,\tilde h})=
\gamma \chi^2_\varepsilon(t)\nabla (w^h\pm w^{\tilde h})\cdot \nabla (w^h\pm w^{\tilde h}).  
\]
Denote by $\psi^{\varepsilon,h\pm \tilde h,j}$ the solution of 
\eqref{eq_heat} with $F=F^{\varepsilon, h\pm \tilde h}$,  $\kappa=\kappa_j$ and $A=A_j$, $j=1,2$.  Set 
\[
\alpha^{(j)}(t,x)=\alpha^{\varepsilon,h,\tilde h,j}(t,x)=\frac{1}{4}(\psi^{\varepsilon,h+\tilde h,j}(t,x)-\psi^{\varepsilon,h-\tilde h,j}(t,x)).
\]
Thus, $\alpha^{(j)}$ is a solution of the following inhomogeneous initial boundary value problem, 
\begin{equation}
\label{eq_heat_alpha}
\begin{aligned}
(\p_t+P_j)\alpha^{(j)}&=\frac{1}{4}\kappa_j(x)\chi_\varepsilon^2(t)\gamma\\
&\bigg(\nabla (w^h+w^{\tilde h})\cdot \nabla (w^h+w^{\tilde h}) - \nabla (w^h-w^{\tilde h})\cdot \nabla (w^h-w^{\tilde h})\bigg)\\
&=\kappa_j(x)\chi_\varepsilon^2(t)\gamma \nabla w^h\cdot \nabla w^{\tilde h},
\quad  \textrm{in}\quad \R_+\times \Omega,\\
\alpha^{(j)}|_{\overline{\R}_+\times \p \Omega}&=0,\quad 
\alpha^{(j)}|_{t=0}=0.
\end{aligned}
\end{equation}
Since 
\[
P_j=-\kappa_j\nabla\cdot A_j\nabla 
\]
 is a self-adjoint positive operator in $L^2_{\kappa_j}(\Omega)$ with the domain $\mathcal{D}(P_j)=(H^1_0\cap H^2)(\Omega)$, the spectrum of 
$P_j$ is discrete, accumulating at $+\infty$,
consisting of eigenvalues of finite multiplicity,
$0<\lambda_1^{(j)}\le \lambda_2^{(j)}\le \cdots\to \infty$.
Associated to the eigenvalues $\lambda_k^{(j)}$ we have the eigenfunctions $\varphi_k^{(j)}\in \mathcal{D}(P_j)$, which
form an orthonormal basis in $L^2_{\kappa_j}(\Omega)$. In what follows we shall assume, as we may, that the eigenfunctions $\varphi_k^{(j)}$ are real-valued. 
Hence, 
\[
\alpha^{(j)}(t,x)=\sum_{k=1}^\infty c^{(j)}_{k,\varepsilon}(t)\varphi_k^{(j)}(x),\quad c^{(j)}_{k,\varepsilon}=(\alpha^{(j)},\varphi_k^{(j)})_{L^2_{\kappa_j}},
\]
with convergence in $C(\overline{\R}_+,\mathcal{D}(P_j))$. Therefore, 
\[
\nu\cdot A_j\nabla \alpha^{(j)}|_{\p \Omega}=\sum_{k=1}^{\infty}c^{(j)}_{k,\varepsilon}(t)(\nu\cdot A_j\nabla \varphi^{(j)}_k)|_{\p \Omega}
\]
with convergence in $H^{1/2}(\p \Omega)$, for each fixed $t\ge 1$.

Next we notice that 
\[
\nu\cdot A_j\nabla \alpha^{(j)}|_{\p \Omega\times\overline{\R}_+}=\frac{1}{4}\Sigma_{\gamma,\kappa_j,A_j}(\chi_\varepsilon (t)(h+\tilde h))-\frac{1}{4}\Sigma_{\gamma,\kappa_j,A_j}(\chi_\varepsilon (t)(h-\tilde h)).
\]
Thus, since $\Sigma_{\gamma,\kappa_1,A_1}=\Sigma_{\gamma,\kappa_2,A_2}$, it follows that for all $t\ge 1$, 
\begin{equation}
\label{eq_bou_100}
\sum_{k=1}^{\infty}c^{(1)}_{k,\varepsilon}(t)(\nu\cdot A_1\nabla \varphi^{(1)}_k)|_{\p \Omega}=\sum_{k=1}^{\infty}c^{(2)}_{k,\varepsilon}(t)(\nu\cdot A_2\nabla \varphi^{(2)}_k)|_{\p \Omega}.
\end{equation}

Here we would like to let $\varepsilon\to 0$.  In order to do so, it will be convenient to obtain an explicit representation of the Fourier coefficients $c^{(j)}_{k,\varepsilon}(t)$.

Set
$d^{(j)}_k=(\gamma \nabla w^h\cdot \nabla w^{\tilde h},\varphi^{(j)}_k)_{L^2}$, where the scalar product is taken in the space $L^2(\Omega,dx)$. 
It follows from 
\eqref{eq_heat_alpha} that
\begin{align*}
\p_t c^{(j)}_{k,\varepsilon}(t)+\lambda_k^{(j)} c^{(j)}_{k,\varepsilon}(t)&=\chi^2_{\varepsilon}(t) d^{(j)}_k,\\
c^{(j)}_{k,\varepsilon}(0)&=0.
\end{align*}
Hence,
\[
c^{(j)}_{k,\varepsilon}(t)=e^{-\lambda_k t} d_k^{(j)} \int_0^t e^{\lambda^{(j)}_k s} \chi^2_\varepsilon(s)ds, \quad t\ge 0,
\]
and $c^{(j)}_{k,\varepsilon}(t)$ is uniformly bounded in $k$, $\varepsilon$.
For $t\ge 1$ and $k=1,2,\dots$, fixed, we get
\begin{equation}
\label{eq_c_k}
c^{(j)}_{k,\varepsilon}(t)=e^{-\lambda^{(j)}_k t} d_k \int_0^1\chi^2(s)e^{\lambda^{(j)}_ks\varepsilon}ds\to e^{-\lambda^{(j)}_k t} d^{(j)}_k, \textrm{ as } \varepsilon \to 0. 
\end{equation}
Using \eqref{eq_c_k}, we may let $\varepsilon\to 0$ in \eqref{eq_bou_100}, and conclude that 
\begin{equation}
\label{eq_ser_1}
\sum_{k=1}^{\infty} e^{-\lambda_k^{(1)} t}d_k^{(1)} (\nu\cdot A_1\nabla \varphi_k^{(1)})|_{\p \Omega}=\sum_{k=1}^{\infty} e^{-\lambda_k^{(2)} t}d_k^{(2)} (\nu \cdot A_2\nabla \varphi_k^{(2)})|_{\p \Omega},\quad  t\ge 1. 
\end{equation}

In what follows we shall have to distinguish the eigenvalues of the operator $P_j$, $j=1,2$. In order to do that let us continue to denote by $\lambda_k^{(j)}$ the sequence of \emph{distinct} eigenvalues of $P_j$ and let $m_k^{(j)}$ denote the multiplicity of $\lambda_k^{(j)}$. 
Let 
$\varphi^{(j)}_{k,1}, \dots, \varphi^{(j)}_{k,m_{k}^{(j)}}$ be an orthonormal basis of the eigenfunctions, corresponding to the eigenvalue $\lambda_k^{(j)}$.
By the uniqueness of the Dirichlet series, see \cite{Choulli_book},  and  \eqref{eq_ser_1}, we obtain the following result.

\begin{prop}
\label{prop_almost_spec}
Assume that  $\Sigma_{\gamma,\kappa_1,A_1}=\Sigma_{\gamma,\kappa_2,A_2}$. Then  for all $k=1,2,\dots$, we have
\[
\lambda^{(1)}_k= \lambda^{(2)}_k,
\]
and
\begin{equation}
\label{eq_ser_2}
\sum_{i=1}^{m_k^{(1)}} d_{k,i}^{(1)} (\nu \cdot A_1\nabla \varphi_{k,i}^{(1)})|_{\p \Omega}=\sum_{i=1}^{m_k^{(2)}} d_{k,i}^{(2)} (\nu \cdot A_2\nabla \varphi_{k,i}^{(2)})|_{\p \Omega}.
\end{equation}
Here $d^{(j)}_{k,i}=(\gamma \nabla w^h\cdot \nabla w^{\tilde h},\varphi^{(j)}_{k,i})_{L^2}$ and $h,\tilde h\in H^{s}(\p \Omega)$ are arbitrary functions. 
\end{prop}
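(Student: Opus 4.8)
The plan is to read \eqref{eq_ser_1} as an identity between two $H^{1/2}(\p\Omega)$--valued Dirichlet series and to combine the uniqueness of such series with the density statement of Proposition~\ref{prop_density_gradients}.

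First I would group the eigenfunctions on each side of \eqref{eq_ser_1} by the \emph{distinct} eigenvalues. Setting
\[
B_k^{(j)}(h,\tilde h):=\sum_{i=1}^{m_k^{(j)}}d_{k,i}^{(j)}\,(\nu\cdot A_j\nabla\varphi_{k,i}^{(j)})|_{\p\Omega}\in H^{1/2}(\p\Omega),
\]
identity \eqref{eq_ser_1} reads $\sum_{k\ge1}e^{-\lambda_k^{(1)}t}B_k^{(1)}(h,\tilde h)=\sum_{k\ge1}e^{-\lambda_k^{(2)}t}B_k^{(2)}(h,\tilde h)$ for all $t\ge1$, with the exponents in each family strictly increasing to $+\infty$. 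I would record that each of these series converges absolutely in $H^{1/2}(\p\Omega)$ for every $t>0$: elliptic regularity gives $\|\nu\cdot A_j\nabla\varphi_{k,i}^{(j)}\|_{H^{1/2}(\p\Omega)}\le C(1+\lambda_k^{(j)})$, Bessel's inequality gives $\sum_{k,i}|d_{k,i}^{(j)}|^2\le\|\gamma\nabla w^h\cdot\nabla w^{\tilde h}\|_{L^2(\Omega)}^2$, and the counting function of $P_j$ grows polynomially, so $\sum_{k\ge1}e^{-\lambda_k^{(j)}t}\|B_k^{(j)}(h,\tilde h)\|_{H^{1/2}(\p\Omega)}<\infty$ by the Cauchy--Schwarz inequality.

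Next I would invoke the uniqueness theorem for Dirichlet series with coefficients in the Banach space $X=H^{1/2}(\p\Omega)$ (see \cite{Choulli_book}): merging the two exponent sequences one rewrites the identity as $\sum_l e^{-\mu_l t}C_l=0$ for $t\ge1$ with $\mu_l\uparrow+\infty$, multiplies by $e^{\mu_{l_0}t}$ where $\mu_{l_0}$ is the smallest exponent carrying a nonzero coefficient, and lets $t\to+\infty$; absolute convergence at $t=1$ makes the tail of the series vanish in the limit, so $C_{l_0}=0$, and iterating shows all $C_l=0$. Translated back, this yields: whenever $\lambda_p^{(1)}$ is not an eigenvalue of $P_2$ one has $B_p^{(1)}(h,\tilde h)=0$ (and symmetrically with $P_1$ and $P_2$ interchanged), and whenever $\lambda_p^{(1)}=\lambda_q^{(2)}$ one has $B_p^{(1)}(h,\tilde h)=B_q^{(2)}(h,\tilde h)$.

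The step I expect to be the main obstacle is to promote this to the statement that \emph{all} distinct eigenvalues coincide, since the uniqueness theorem is blind to an eigenvalue whose coefficient happens to vanish; here the freedom in $h,\tilde h$ comes in. The conormal traces $(\nu\cdot A_j\nabla\varphi_{k,1}^{(j)})|_{\p\Omega},\dots,(\nu\cdot A_j\nabla\varphi_{k,m_k^{(j)}}^{(j)})|_{\p\Omega}$ are linearly independent in $H^{1/2}(\p\Omega)$, because a nontrivial vanishing combination would produce a nonzero $\lambda_k^{(j)}$--eigenfunction of $P_j$ with zero Dirichlet data and zero conormal derivative on $\p\Omega$, hence identically zero by unique continuation for the elliptic operator $P_j-\lambda_k^{(j)}$. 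Consequently $B_k^{(j)}(h,\tilde h)\neq0$ as soon as the vector $(d_{k,1}^{(j)},\dots,d_{k,m_k^{(j)}}^{(j)})$ is nonzero, i.e.\ as soon as $\gamma\nabla w^h\cdot\nabla w^{\tilde h}$ is not $L^2(\Omega)$--orthogonal to $\mathrm{span}\{\varphi_{k,1}^{(j)},\dots,\varphi_{k,m_k^{(j)}}^{(j)}\}$; such $h,\tilde h$ exist because, by Proposition~\ref{prop_density_gradients}, the span of the products $\gamma\nabla w_1\cdot\nabla w_2$ is dense in $L^2(\Omega)$ and so cannot be orthogonal to a nonzero finite-dimensional subspace. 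Therefore no eigenvalue of $P_1$ can fail to be one of $P_2$ and conversely, so $\lambda_k^{(1)}=\lambda_k^{(2)}$ for every $k$; reinserting this into the Dirichlet series identity gives $B_k^{(1)}(h,\tilde h)=B_k^{(2)}(h,\tilde h)$ for all $k$ and all $h,\tilde h\in H^s(\p\Omega)$, which is exactly \eqref{eq_ser_2}.
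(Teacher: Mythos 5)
Your argument follows the same route as the paper: the paper derives identity \eqref{eq_ser_1} and then simply invokes ``the uniqueness of the Dirichlet series'' from \cite{Choulli_book} to conclude the proposition. Your proposal is correct and in fact supplies the step the paper leaves implicit, namely why no eigenvalue can become invisible through identically vanishing coefficients: you rule this out via the linear independence of the conormal traces (which is exactly the paper's Lemma~\ref{lem_alg_1}, stated only \emph{after} the proposition) combined with the density statement of Proposition~\ref{prop_density_gradients}. Your convergence estimates justifying the termwise manipulation of the $H^{1/2}(\p\Omega)$--valued series are likewise correct and are not spelled out in the paper.
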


Let us introduce the following linear continuous  operators
\begin{align*}
R_k^{(j)}&:L^2(\Omega)\to L^2(\p \Omega),\\
R_k^{(j)}(F)&=\sum_{i=1}^{m_k^{(j)}} (F,\varphi_{k,i}^{(j)})_{L^2} (\nu\cdot A_j\nabla \varphi_{k,i}^{(j)})|_{\p \Omega}, \ j=1,2,\ k=1,2,\dots.
\end{align*}
Proposition \ref{prop_density_gradients} together with \eqref{eq_ser_2} implies that $R_k^{(1)}=R_k^{(2)}$ on a dense subset of $L^2(\Omega)$, and hence, everywhere.   
On the level of the distribution kernels, we obtain that for all $ k=1,2,\dots$,
\begin{equation}
\label{eq_sch_ker_2}
\sum_{i=1}^{m_k^{(1)}} \varphi_{k,i}^{(1)}(x) (\nu \cdot A_1\nabla \varphi_{k,i}^{(1)})(y)=\sum_{i=1}^{m_k^{(2)}} \varphi_{k,i}^{(2)}(x) (\nu \cdot A_2\nabla \varphi_{k,i}^{(2)})(y), x\in\Omega,y\in \p \Omega.
\end{equation}

We shall next  need the following result,  see \cite{Canuto_Kavian_2001}. Since the argument is short, for the convenience of the reader, we give it here. 

\begin{lem}
\label{lem_alg_1}
The functions $ (\nu\cdot A_j\nabla\varphi^{(j)}_{k,1})|_{\p \Omega}, \dots, (\nu\cdot A_j\nabla\varphi^{(j)}_{k,m_{k}^{(j)}})|_{\p \Omega}$ are linearly independent, $j=1,2$.
\end{lem}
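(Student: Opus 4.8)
The plan is to argue by contradiction using the unique continuation principle for the elliptic operator $-\nabla\cdot A_j\nabla$. Suppose there is a nontrivial linear combination $v=\sum_{i=1}^{m_k^{(j)}} c_i\varphi_{k,i}^{(j)}$ with $(\nu\cdot A_j\nabla v)|_{\p\Omega}=0$. Then $v$ is an eigenfunction of $P_j$ (since the $\varphi_{k,i}^{(j)}$ all share the eigenvalue $\lambda_k^{(j)}$), so it solves $-\nabla\cdot(A_j\nabla v)=\kappa_j^{-1}\lambda_k^{(j)} v$ in $\Omega$. Moreover, being in $\mathcal{D}(P_j)=(H_0^1\cap H^2)(\Omega)$, it already satisfies the Dirichlet condition $v|_{\p\Omega}=0$. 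Thus $v$ satisfies both homogeneous Cauchy data on $\p\Omega$ for a second order elliptic equation with smooth coefficients.

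First I would extend $v$ by zero across $\p\Omega$ to a slightly larger open set $\widetilde\Omega\supset\overline\Omega$, extending $A_j$ and $\kappa_j$ smoothly as well (keeping $A_j$ symmetric and positive definite). Because $v$ and its conormal derivative both vanish on $\p\Omega$ and $v\in H^2$, this zero extension lies in $H^2(\widetilde\Omega)$ and satisfies the same elliptic equation $-\nabla\cdot(A_j\nabla v)=\kappa_j^{-1}\lambda_k^{(j)} v$ weakly on all of $\widetilde\Omega$ (one checks the boundary terms cancel when integrating against a test function, using exactly the vanishing Cauchy data). Then the extended $v$ solves a second order elliptic equation with smooth coefficients on $\widetilde\Omega$ and vanishes on the nonempty open set $\widetilde\Omega\setminus\overline\Omega$. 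By Aronszajn's unique continuation theorem it vanishes identically on the connected component, hence on $\Omega$, forcing $c_1=\cdots=c_{m_k^{(j)}}=0$ by orthonormality of the $\varphi_{k,i}^{(j)}$, a contradiction.

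Alternatively, and perhaps more cleanly, one can avoid extending and instead invoke a boundary unique continuation result directly: a solution of a second order elliptic equation with smooth coefficients in $\Omega$ having vanishing Dirichlet and conormal data on an open portion of $\p\Omega$ (here all of it) must vanish near that portion, and then interior unique continuation propagates this to all of $\Omega$. Either route reduces the lemma to a standard unique continuation statement.

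The main obstacle is purely a bookkeeping one: making sure the zero extension of $v$ genuinely solves the elliptic equation across $\p\Omega$ in the weak sense, which is precisely where both pieces of Cauchy data ($v|_{\p\Omega}=0$ \emph{and} $(\nu\cdot A_j\nabla v)|_{\p\Omega}=0$) get used — dropping either one breaks the argument. Everything else (smoothness of coefficients, $H^2$ regularity, applicability of Aronszajn's theorem) is immediate from the hypotheses already in place.
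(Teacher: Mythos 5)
Your argument is exactly the paper's: form the combination $\varphi=\sum_i c_i\varphi^{(j)}_{k,i}$, observe it is an eigenfunction with vanishing Dirichlet and conormal Cauchy data on $\p\Omega$, invoke unique continuation to get $\varphi\equiv 0$, and conclude $c_i=0$ by linear independence of the eigenfunctions. The only difference is that you spell out the unique continuation step (zero extension plus Aronszajn, or a boundary UCP), which the paper simply cites as ``the unique continuation principle''; this is a correct filling-in of the same route, not a different one.
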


\begin{proof}
Assume that there are $c_1,\dots,c_{m_k^{(j)}}\in \R$ such that
\[
\sum_{i=1}^{m_{k}^{(j)}} c_i  (\nu\cdot A_j\nabla\varphi^{(j)}_{k,i})|_{\p \Omega}=0. 
\]
Then $\varphi=\sum_{i=1}^{m_{k}^{(j)}} c_i \varphi^{(j)}_{k,i}$ satisfies
\begin{align*}
&P_j\varphi=\lambda_k^{(j)}\varphi\quad \text{in}\quad \Omega,\\
&\varphi|_{\p \Omega}=0,\quad
\nu \cdot A_j\nabla\varphi|_{\p \Omega}=0.
\end{align*}
By the unique continuation principle, we get $\varphi=0$ in $\Omega$. Thus,  $c_i=0$ for $1\le i \le m_{k}^{(j)}$. This proves the lemma. 

\end{proof}

Our next goal is to analyze the consequences of \eqref{eq_sch_ker_2}, and the key step here is the following algebraic result which is similar to \cite[Lemma 2.3]{Canuto_Kavian_2001}.

\begin{lem}
\label{lem_alg_2}
Let $f_i:\Omega\to \R$, $\tilde f_i:\p \Omega\to \R$, $i=1,\dots,m^{(1)}$, and $g_l:\Omega\to\R$, $\tilde g_l:\p \Omega\to \R$, $l=1,\dots,m^{(2)}$, be such that 
\begin{equation}
\label{eq_lem_alg}
\sum_{i=1}^{m^{(1)}} f_i(x)\tilde f_i(y)= \sum_{l=1}^{m^{(2)}} g_l(x)\tilde g_l(y)\quad \textrm{for all  }  x\in\Omega, y\in \p \Omega.
\end{equation}
Moreover,  assume that the systems  $\{f_1,\dots,f_{m^{(1)}}\}$, $\{\tilde f_1,\dots,\tilde f_{m^{(1)}}\}$, $\{g_1,\dots,g_{m^{(2)}}\}$ and  $\{\tilde g_1,\dots,\tilde g_{m^{(2)}}\}$  are all linearly independent. Then $m^{(1)}=m^{(2)}$ and 
there exists an $m^{(1)}\times m^{(1)}$ invertible matrix $T$ with real entries such that
\[
F(x)=TG(x),\quad \tilde F(y)=(T^t)^{-1}\tilde G(y)\quad \textrm{for all  } x\in \Omega, y\in \p \Omega.
\]
Here we use the notation
\begin{align*}
&F(x)=\begin{pmatrix} f_1(x)\\
\vdots\\
f_{m^{(1)}}(x)
\end{pmatrix}, \tilde F(y)=\begin{pmatrix} \tilde f_1(y)\\
\vdots\\
\tilde f_{m^{(1)}}(y)
\end{pmatrix},\\
 &G(x)=\begin{pmatrix} g_1(x)\\
\vdots\\
g_{m^{(2)}}(x)
\end{pmatrix}, \tilde G(y)=\begin{pmatrix} \tilde g_1(y)\\
\vdots\\
\tilde g_{m^{(2)}}(y)
\end{pmatrix}.
\end{align*}

\end{lem}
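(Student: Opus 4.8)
The plan is to treat \eqref{eq_lem_alg} as an identity between two bilinear expressions and extract a linear-algebraic relationship from it by testing against dual functionals. First I would observe that since $\{f_1,\dots,f_{m^{(1)}}\}$ is linearly independent in the space of functions on $\Omega$, there exist points or, more robustly, linear functionals (e.g. integration against suitable test functions) $\mu_1,\dots,\mu_{m^{(1)}}$ on that space with $\mu_a(f_i)=\delta_{ai}$; applying $\mu_a$ in the $x$-variable to \eqref{eq_lem_alg} yields
\[
\tilde f_a(y)=\sum_{l=1}^{m^{(2)}} \mu_a(g_l)\,\tilde g_l(y),\quad y\in\p\Omega,
\]
so each $\tilde f_a$ lies in the span of $\{\tilde g_1,\dots,\tilde g_{m^{(2)}}\}$. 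Symmetrically, applying dual functionals in the $y$-variable that are biorthogonal to $\{\tilde g_1,\dots,\tilde g_{m^{(2)}}\}$ shows each $g_l$ lies in the span of $\{f_1,\dots,f_{m^{(1)}}\}$, and by the same token each $f_i$ lies in the span of the $g_l$ and each $\tilde g_l$ in the span of the $\tilde f_i$. Hence $\mathrm{span}\{f_i\}=\mathrm{span}\{g_l\}$, and since both systems are linearly independent, $m^{(1)}=m^{(2)}=:m$; write the change of basis as $F=TG$ with $T\in\R^{m\times m}$ invertible, and similarly $\tilde F=S\tilde G$ for some invertible $S\in\R^{m\times m}$.

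Next I would plug these two relations back into \eqref{eq_lem_alg}. The left side becomes $\sum_{i}f_i(x)\tilde f_i(y)=F(x)^t\tilde F(y)=(TG(x))^t(S\tilde G(y))=G(x)^t T^t S\,\tilde G(y)$, while the right side is $\sum_l g_l(x)\tilde g_l(y)=G(x)^t\tilde G(y)$. Thus
\[
G(x)^t\big(T^tS-\Id\big)\tilde G(y)=0\quad\textrm{for all } x\in\Omega,\ y\in\p\Omega.
\]
Using the linear independence of $\{g_l\}$ and of $\{\tilde g_l\}$ once more — test against the biorthogonal functionals in $x$ and in $y$ respectively — we conclude every entry of $T^tS-\Id$ vanishes, i.e. $S=(T^t)^{-1}$. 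This gives exactly $F(x)=TG(x)$ and $\tilde F(y)=(T^t)^{-1}\tilde G(y)$, as claimed.

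The main point requiring care is the construction and legitimacy of the biorthogonal functionals: linear independence of a finite system of, say, $L^2$ functions guarantees that the Gram matrix is invertible, so one can produce explicit dual functionals as finite linear combinations of the $L^2$-pairings $\langle\,\cdot\,,f_i\rangle$ (and likewise on $\p\Omega$); this is where one must be a little attentive, since the $f_i$ live on $\Omega$ and the $\tilde f_i$ on $\p\Omega$, so the two sets of functionals act on different variables — but they commute with each other, so applying them in sequence to \eqref{eq_lem_alg} is unambiguous. Everything else is bookkeeping with finite matrices. I do not anticipate a genuine obstacle here; the lemma is a clean finite-dimensional statement and the only subtlety is phrasing the duality argument at the level of function spaces rather than pretending the functions are separating at points.
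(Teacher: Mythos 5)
Your proof is correct and follows essentially the same route as the paper: the paper realizes your biorthogonal functionals concretely as point evaluations (choosing points $y_1,\dots,y_{m^{(1)}}\in\p\Omega$ at which the matrix of values of the $\tilde f_i$ is invertible), derives $F=TG$ and $G=T_1F$ to get $m^{(1)}=m^{(2)}$ and the invertibility of $T$, and then substitutes back into the bilinear identity to conclude $\tilde T^tT=I$, exactly as you do with $T^tS=I$. The only cosmetic difference is that point evaluations work for arbitrary real-valued functions without any integrability hypothesis, whereas your $L^2$-pairing realization would need the functions to be square integrable — but since you also offer the point-evaluation variant, there is no gap.
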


\begin{proof}
As $\tilde f_1$ is not identically zero in $\p \Omega$, there exists $y_1\in \p \Omega$ such that $\tilde f_1(y_1)\ne 0$.  
Assuming that  
\[
\det\begin{pmatrix} \tilde f_1(y_1) & \tilde f_2(y_1)\\
\tilde f_1(y) & \tilde f_2(y) 
\end{pmatrix} =0\quad  \textrm{for all  } y\in \p \Omega,
\]
we get that $\tilde f_1, \tilde f_2$ are linearly dependent which contradicts the assumptions of the proposition. Thus, there exists  $y_2\in \p\Omega$ such that
\[
\det\begin{pmatrix} \tilde f_1(y_1) & \tilde f_2(y_1)\\
\tilde f_1(y_2) & \tilde f_2(y_2) 
\end{pmatrix} \ne 0. 
\]
Continuing in the same way, we find points $y_1,y_2,\dots, y_{m^{(1)}}\in\p \Omega$ such that the matrix
\[
Q_{\tilde f}=\begin{pmatrix} \tilde f_1(y_1) & \tilde f_2(y_1)& \hdots & \tilde f_{m^{(1)}}(y_1) \\
\tilde f_1(y_2) & \tilde f_2(y_2) & \hdots & \tilde f_{m^{(1)}}(y_2)\\
\vdots & \vdots & \vdots & \vdots\\
\tilde f_1(y_{m^{(1)}}) & \tilde f_2(y_{m^{(1)}}) & \hdots & \tilde f_{m^{(1)}}(y_{m^{(1)}})
\end{pmatrix}
\]
is invertible. 
It follows from \eqref{eq_lem_alg} that $Q_{\tilde f}F(x)=Q_{\tilde g}G(x)$ for any $x\in \Omega$, where
\[
Q_{\tilde g}=\begin{pmatrix} \tilde g_1(y_1) & \hdots & \tilde g_{m^{(2)}}(y_1) \\
\vdots & \vdots & \vdots\\
\tilde g_1(y_{m^{(1)}}) &  \hdots & \tilde g_{m^{(2)}}(y_{m^{(1)}})
\end{pmatrix}.
\]
Thus, $F(x)=TG(x)$ for any $x\in \Omega$, where $T=Q_{\tilde f}^{-1}Q_{\tilde g}$, and therefore, $m^{(1)}\le m^{(2)}$. Similarly, using the fact that $\{\tilde g_1,\dots, \tilde g_{m^{(2)}}\}$ is linearly independent, we have $G(x)=T_1F(x)$ for any $x\in \Omega$, and  $m^{(2)}\le m^{(1)}$. Hence, $m^{(1)}= m^{(2)}$.  Furthermore, 
\begin{equation}
\label{eq_lem_alg_2}
F(x)=TT_1F(x) \quad \textrm{for all  } x\in \Omega.
\end{equation}
 Since the system $\{f_1,\dots, f_{m^{(1)}}\}$ is linearly independent, in the same way as above, we see that there are points $x_1,\dots, x_{m^{(1)}}\in \Omega$ such that the vectors $F(x_1),\dots, F(x_{m^{(1)}})$ form a basis in $\C^{m^{(1)}}$.  Thus, \eqref{eq_lem_alg_2} implies that $TT_1=I$ and therefore, $T$ is invertible. Similarly, one can see that $\tilde F(y)=\tilde T \tilde G(y)$ for all $y\in \p \Omega$ with an invertible matrix $\tilde T$. 

It follows from \eqref{eq_lem_alg} that $F(x)\cdot \tilde F(y)=G(x)\cdot \tilde G(y)$ and therefore, 
\[
(\tilde T^tT-I)G(x)\cdot \tilde G(y)=0 \quad \textrm{for all  } x\in \Omega, y\in \p \Omega.
\] 
Since there exist  points $x_1,\dots,x_{m^{(1)}}\in \Omega$ and $y_1,\dots,y_{m^{(1)}}\in \p \Omega$ such that  the vectors $G(x_1), \dots G(x_{m^{(1)}})$ (respectively, $\tilde G(y_1), \dots \tilde G(y_{m^{(1)}})$) form a basis in $\C^{m^{(1)}}$, we get $\tilde T^t T=I$.  This proves the claim. 
\end{proof}

If follows from Lemma \ref{lem_alg_2} together with 
\eqref{eq_sch_ker_2} and Lemma \ref{lem_alg_1} that $m^{(1)}_k=m^{(2)}_k=:m_k$, and there exists an $m_k\times m_k$ invertible matrix $T$ such that 
\begin{equation}
\label{eq_vect_alg_1}
\begin{pmatrix} \varphi^{(1)}_{k,1}(x)\\
\vdots\\
\varphi^{(1)}_{k,m_k}(x)
\end{pmatrix}=T \begin{pmatrix} \varphi^{(2)}_{k,1}(x)\\
\vdots\\
\varphi^{(2)}_{k,m_k}(x)
\end{pmatrix},\quad  x\in \Omega,
\end{equation}
and
\begin{equation}
\label{eq_vect_alg_2}
\begin{pmatrix} \nu \cdot A_1(y)\nabla \varphi^{(1)}_{k,1}(y)\\
\vdots\\
 \nu \cdot A_1(y)\nabla\varphi^{(1)}_{k,m_k}(y)
\end{pmatrix}= (T^t)^{-1} \begin{pmatrix}  \nu \cdot A_2(y)\nabla\varphi^{(2)}_{k,1}(y)\\
\vdots\\
 \nu \cdot A_2(y)\nabla\varphi^{(2)}_{k,m_k}(y)
\end{pmatrix},\quad  y\in \p \Omega.
\end{equation}
Using \eqref{eq_vect_alg_1} and \eqref{eq_vect_alg_2}, we have
\begin{equation}
\label{eq_orth_mat}
T^tT\begin{pmatrix} \nu \cdot A_1(y)\nabla \varphi^{(2)}_{k,1}(y)\\
\vdots\\
 \nu \cdot A_1(y)\nabla\varphi^{(2)}_{k,m_k}(y)
\end{pmatrix}= \begin{pmatrix}  \nu \cdot A_2(y)\nabla\varphi^{(2)}_{k,1}(y)\\
\vdots\\
\nu \cdot A_2(y)\nabla\varphi^{(2)}_{k,m_k}(y)
\end{pmatrix},\quad  y\in \p \Omega.
\end{equation}

The next step is to show that the matrix $T$ is in fact  orthogonal. This will follow once we establish the following result. 

\begin{prop}
If $\Sigma_{\gamma,\kappa_1,A_1}=\Sigma_{\gamma,\kappa_2,A_2}$ then $A_1|_{\p \Omega}=A_2|_{\p\Omega}$.
\end{prop}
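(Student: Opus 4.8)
The plan is to assemble the operator identities $R_k^{(1)}=R_k^{(2)}$ — valid for every $k$, for the operators $R_k^{(j)}$ introduced after Proposition~\ref{prop_almost_spec} — into an equality of interior-source-to-boundary-flux maps of a single elliptic boundary value problem, and then to invoke the boundary determination result of Appendix~A.

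Fix $\mu\ge 0$ and set $L_j^\mu:=-\nabla\cdot A_j\nabla+\mu\kappa_j^{-1}$, a positive elliptic operator; since $(P_j+\mu)\Psi=\kappa_j F$ is the same as $L_j^\mu\Psi=F$, the inverse $(L_j^\mu)^{-1}F=(P_j+\mu)^{-1}(\kappa_j F)$ maps $L^2(\Omega)$ into $\mathcal{D}(P_j)=(H^1_0\cap H^2)(\Omega)$. For $F\in L^2(\Omega)$ the partial sums of $\sum_{k\ge 1}(\lambda_k+\mu)^{-1}R_k^{(j)}(F)$ equal $\nu\cdot A_j\nabla\big[\sum_{k,i}(\lambda_k+\mu)^{-1}(F,\varphi_{k,i}^{(j)})_{L^2}\varphi_{k,i}^{(j)}\big]\big|_{\p\Omega}$, the bracket being precisely $(L_j^\mu)^{-1}F$, and the latter series converges in $\mathcal{D}(P_j)$, hence in $H^2(\Omega)$. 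Thus $\sum_{k\ge 1}(\lambda_k+\mu)^{-1}R_k^{(j)}$ converges strongly, as a map $L^2(\Omega)\to H^{1/2}(\p\Omega)$, to the interior-source-to-boundary-flux map
\[
\Xi_{A_j}^\mu\colon F\longmapsto \nu\cdot A_j\nabla w_j\big|_{\p\Omega},\qquad L_j^\mu w_j=F\quad\textrm{in}\quad\Omega,\quad w_j|_{\p\Omega}=0 ,
\]
of the Dirichlet problem for $L_j^\mu$. Since $R_k^{(1)}=R_k^{(2)}$ for all $k$, we conclude that $\Xi_{A_1}^\mu=\Xi_{A_2}^\mu$; equivalently, on the level of Schwartz kernels, $\nu\cdot A_1\nabla_y G_1^\mu(x,y)\big|_{y\in\p\Omega}=\nu\cdot A_2\nabla_y G_2^\mu(x,y)\big|_{y\in\p\Omega}$ for all $x\in\Omega$, where $G_j^\mu$ is the Dirichlet Green's function of $L_j^\mu$, so that the Poisson operators of $L_1^\mu$ and $L_2^\mu$ coincide as well. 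Taking $\mu=0$, the map $\Xi_{A_j}^0$ is the interior-to-boundary datum attached to the elliptic boundary value problem $-\nabla\cdot A_j\nabla w=F$ in $\Omega$, $w|_{\p\Omega}=0$, and the result of Appendix~A applied to it gives $A_1|_{\p\Omega}=A_2|_{\p\Omega}$, as claimed.

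I expect the main obstacle to be Appendix~A itself: recovering the full symmetric matrix $A$ pointwise on $\p\Omega$ from the map $\Xi_A^0$. The natural route is a local analysis near a fixed point $x_0\in\p\Omega$ in boundary normal coordinates — either a parametrix for $-\nabla\cdot A\nabla$ there, or the construction of solutions of $-\nabla\cdot A\nabla v=0$ from sources localized near $x_0$ and oscillating along $\p\Omega$ — arranged so that the leading asymptotics of the resulting boundary flux determine all components of $A(x_0)$: the tangential block through the tangential frequency, the conormal-conormal entry through the rate of decay into $\Omega$, and the mixed entries through the cross terms. Matching these expansions for $A_1$ and $A_2$ then forces $A_1(x_0)=A_2(x_0)$, and $x_0\in\p\Omega$ was arbitrary. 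The only other point, purely technical, is the strong-convergence step in the reduction, which rests solely on $(L_j^\mu)^{-1}$ mapping $L^2(\Omega)$ into $\mathcal{D}(P_j)$ and on eigenfunction expansions converging in the graph norm of $P_j$.
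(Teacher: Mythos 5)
Your argument is correct and follows essentially the same route as the paper: both reduce the problem to showing that the interior-source-to-boundary-flux map $F\mapsto \nu\cdot A_j\nabla u|_{\p\Omega}$ of the Dirichlet problem $-\nabla\cdot(A_j\nabla u)=F$, $u|_{\p\Omega}=0$ is the same for $j=1,2$ (the paper does this by expanding the solution for the dense family of sources $\gamma\nabla w^h\cdot\nabla w^{\tilde h}$ and invoking Proposition~\ref{prop_almost_spec} together with Proposition~\ref{prop_density_gradients}, while you resum the operators $R_k^{(j)}$ with resolvent weights, which packages the same facts), and both then conclude via Proposition~\ref{prop_A-boundary} of Appendix~A, whose parametrix construction in boundary normal coordinates matches the strategy you sketch.
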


\begin{proof}

 Consider the following elliptic boundary value problem,
\begin{equation}
\label{eq_ebvp}
\begin{aligned}
&P_ju=\kappa_j \gamma \nabla w^h\cdot \nabla w^{\tilde h}\quad\text{in}\quad \Omega,\\
&u|_{\p \Omega}=0,
\end{aligned}
\end{equation}
where $w^h$ (respectively, $w^{\tilde h}$) is the solution to the problem \eqref{eq_w_h} with the boundary source $h\in H^s(\p \Omega)$ (respectively, $\tilde h\in H^s(\p \Omega)$), $s\ge 1/2$ large enough.  We shall now return to the original notation, where each eigenvalue $\lambda^{(j)}_k$ of the operator $P_j$ is repeated according its multiplicity. 
Since $0\not\in\textrm{spec}(P_j)$,
 the problem \eqref{eq_ebvp} has the unique solution 
\[
u^{(j)}=\sum_{k=1}^\infty \frac{d_k^{(j)}}{\lambda_k^{(j)}}\varphi_k^{(j)},\quad  d_k^{(j)}=(\gamma \nabla w^h\cdot \nabla w^{\tilde h},\varphi_k^{(j)})_{L^2},
\]
with convergence in $H^2(\Omega)$. 
Thus,
\[
\nu\cdot A_{j} \nabla u^{(j)}|_{\p \Omega}=\sum_{k=1}^\infty \frac{d_k^{(j)}}{\lambda_k^{(j)}}\nu \cdot A_j \nabla\varphi_k^{(j)}|_{\p \Omega},
\]
and therefore, it follows from Proposition \ref{prop_almost_spec} that 
if 
 $\Sigma_{\gamma,\kappa_1,A_1}=\Sigma_{\gamma,\kappa_2,A_2}$, then 
\begin{equation} 
\label{eq_b_nA}
 \nu \cdot A_1 \nabla u^{(1)}|_{\p \Omega}=\nu \cdot A_2 \nabla u^{(2)}|_{\p \Omega}.
\end{equation} 
Define the continuous map
\[
\Psi^{(j)} : L^2(\Omega)\to L^2(\p \Omega),\quad 
F\mapsto \nu \cdot A_j \nabla u^{(F, j)}|_{\p \Omega},
\]
 $u^{(F,j)}$ is a solution to the problem
\begin{align*}
-\nabla\cdot (A_j\nabla u^{(F,j)})&=F\quad\text{in}\quad \Omega,\\
u^{(F,j)}|_{\p \Omega}&=0. 
\end{align*}
It follows from \eqref{eq_b_nA} together with Proposition \ref{prop_density_gradients} that $\Psi^{(1)}=\Psi^{(2)}$ on a dense subset of $L^2(\Omega)$, and thus, everywhere.  Hence, Proposition \ref{prop_A-boundary} in 
Appendix A implies that 
 $A_1|_{\p \Omega}=A_2|_{\p\Omega}$. 
 
 \end{proof}

 Now going back to equation \eqref{eq_orth_mat}, using Lemma \ref{lem_alg_1} we obtain that  $T$ is an orthogonal matrix.  
 Proposition \ref{prop_almost_spec} together with \eqref{eq_vect_alg_1}  
 gives 
  the following result. 

\begin{prop}
\label{prop_spectral}
Assume that $\Sigma_{\gamma,\kappa_1,A_1}=\Sigma_{\gamma,\kappa_2,A_2}$.  Let $\varphi^{(1)}_k$ be an orthonormal basis  in $L^2(\Omega,\kappa_1^{-1}dx)$ of the Dirichlet eigenfunctions of the operator $P_1$.  Then the Dirichlet eigenvalues $\lambda^{(1)}_k$ (respectively, $\lambda^{(2)}_k$) of the operator $P_1$ (respectively, $P_2$), counted with multiplicities,  satisfy $\lambda^{(1)}_k=\lambda^{(2)}_k$ for all $k$ and there exists an orthonormal basis  in $L^2(\Omega,\kappa_2^{-1}dx)$ of the Dirichlet eigenfunctions $\varphi^{(2)}_k$ of the operator $P_2$ such that $\varphi^{(1)}_k=\varphi^{(2)}_k$ for all $k$.
\end{prop}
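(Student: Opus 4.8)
The plan is to collect the ingredients already assembled above. The equality of the \emph{distinct} eigenvalues, $\lambda_k^{(1)}=\lambda_k^{(2)}$, is furnished by Proposition~\ref{prop_almost_spec}; combining \eqref{eq_sch_ker_2} with Lemma~\ref{lem_alg_1} and Lemma~\ref{lem_alg_2} we have already seen that the multiplicities agree, $m_k^{(1)}=m_k^{(2)}=:m_k$, so the eigenvalues of $P_1$ and $P_2$ counted with multiplicity coincide. This settles the first assertion, modulo passing between the two enumerations of the spectrum --- the one grouping equal eigenvalues into eigenspaces, and the one repeating each eigenvalue according to its multiplicity.

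For the eigenfunctions I would argue eigenspace by eigenspace. Fix a distinct eigenvalue and let $\varphi_{k,1}^{(2)},\dots,\varphi_{k,m_k}^{(2)}$ be any orthonormal (in $L^2_{\kappa_2}(\Omega)$) basis of the corresponding eigenspace of $P_2$; after applying the orthogonal change of basis $T=T_k$ produced in \eqref{eq_vect_alg_1}--\eqref{eq_orth_mat} we may arrange that $\varphi_{k,i}^{(1)}=\sum_{j}(T_k)_{ij}\varphi_{k,j}^{(2)}$ for $i=1,\dots,m_k$. Each right-hand side is a finite linear combination of Dirichlet eigenfunctions of $P_2$ with eigenvalue $\lambda_k^{(2)}=\lambda_k^{(1)}$, hence $\varphi_{k,i}^{(1)}$ is itself a Dirichlet eigenfunction of $P_2$ for that eigenvalue. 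The one point that needs a remark is orthonormality with respect to the \emph{other} weight: since $T_k$ is orthogonal, $(\varphi_{k,a}^{(1)},\varphi_{k,b}^{(1)})_{L^2_{\kappa_2}}=\sum_{i,j}(T_k)_{ai}(T_k)_{bj}(\varphi_{k,i}^{(2)},\varphi_{k,j}^{(2)})_{L^2_{\kappa_2}}=(T_kT_k^t)_{ab}=\delta_{ab}$, so the $\varphi_{k,i}^{(1)}$ are orthonormal in $L^2_{\kappa_2}(\Omega)$ even though they were originally normalized in $L^2_{\kappa_1}(\Omega)$; this is precisely where the orthogonality of $T_k$, rather than mere invertibility, is used, and it is what makes the two different weights $\kappa_1,\kappa_2$ harmless.

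It remains to check completeness. Since each $T_k$ is invertible, $\mathrm{span}\{\varphi_{k,i}^{(1)}\}_{i=1}^{m_k}=\mathrm{span}\{\varphi_{k,j}^{(2)}\}_{j=1}^{m_k}$, so the whole family $\{\varphi_{k,i}^{(1)}\}_{k,i}$ spans the same dense subspace of $L^2_{\kappa_2}(\Omega)$ as the orthonormal basis $\{\varphi_{k,j}^{(2)}\}_{k,j}$, hence is itself an orthonormal basis of $L^2_{\kappa_2}(\Omega)$. Relabelling by the multiplicity enumeration and setting $\varphi_k^{(2)}:=\varphi_k^{(1)}$ yields the claimed basis. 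I do not anticipate a genuine obstacle here --- all the analytic and algebraic work has been carried out in Proposition~\ref{prop_almost_spec}, Lemmas~\ref{lem_alg_1}--\ref{lem_alg_2}, and the boundary determination of $A$; the only care needed is bookkeeping, namely that the orthogonal matrices $T_k$ depend on $k$ and that the two enumerations of the spectrum are kept consistent.
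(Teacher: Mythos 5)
Your proof is correct and follows essentially the same route as the paper, which deduces the proposition directly from Proposition~\ref{prop_almost_spec}, the identity \eqref{eq_vect_alg_1}, and the orthogonality of $T$ obtained from \eqref{eq_orth_mat} together with $A_1|_{\p\Omega}=A_2|_{\p\Omega}$ and Lemma~\ref{lem_alg_1}. You merely spell out the bookkeeping (orthonormality in $L^2_{\kappa_2}$ via $T_kT_k^t=I$, and completeness) that the paper leaves implicit.
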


We shall next show that  Proposition \ref{prop_spectral} yields that  $\kappa_1=\kappa_2$. 
Indeed, let us write
\[
\kappa_1=\sum_{k=1}^\infty c_k\varphi_k^{(1)}=\sum_{k=1}^\infty c_k\varphi_k^{(2)}, \quad \kappa_2=\sum_{k=1}^\infty d_k\varphi_k^{(2)},
\]
where the Fourier coefficients $c_k$ are given by
\[
c_k=\int_\Omega \kappa_1 \varphi_k^{(1)}\kappa_1^{-1}dx=\int_\Omega \varphi_k^{(2)}dx=\int_{\Omega}\kappa_2\varphi_k^{(2)}\kappa_2^{-1}dx=d_k.
\]
Thus, $\kappa_1=\kappa_2$.  It follows that $P_1u=P_2u$, for any $u\in C^\infty_0(\Omega)$, and we get $A_1=A_2$. 
The proof of Theorem \ref{thm_main} is complete.

Theorem \ref{thm_main_2} can be proven by exactly the same arguments presented in this section applied to  the problem 
\eqref{eq_heat} with the right hand sides of the form
\[
F(t,x)=\chi_\varepsilon^2(t)H(x), \quad H\in L^2(\Omega), 
\]
where $\chi_\varepsilon$ is given by \eqref{eq_chi_epsilon}.

\begin{appendix}
\section{Boundary reconstruction}

Let $\Omega\subset\R^n$, $n\ge 2$, be a bounded domain with $C^\infty$ boundary, and 
 $A(x)=(a_{jk}(x))$, $1\le j,k\le n$, be 
a real symmetric $n\times n$ matrix with $a_{j,k}(x)\in C^\infty(\overline{\Omega})$. Assume that there exists $C_0>0$ such that 
\[
\sum_{j,k=1}^n a_{jk}(x)\xi_j\xi_k\ge C_0|\xi|^2\quad \textrm{for all } (x,\xi)\in \overline{\Omega}\times \R^n.
\]
 Consider the following elliptic boundary value problem,
\begin{equation}
\label{eq_bvp_2}
\begin{aligned}
-\nabla\cdot (A\nabla u)&=F\quad\text{in}\quad \Omega,\\
u|_{\p \Omega}&=0. 
\end{aligned}
\end{equation}
For any $F\in L^2(\Omega)$, 
the problem \eqref{eq_bvp_2}
has a unique solution $u=u^F\in H^2(\Omega)\cap H^1_0(\Omega)$ and one can define the map,
\begin{equation}
\label{eq_PI}
\Psi:L^2(\Omega)\to L^2(\p \Omega),\quad 
\Psi (F)= \nu \cdot A \nabla u^F|_{\p \Omega},
\end{equation}
where $\nu$ is the unit outer normal to the boundary $\p \Omega$.  We note that the map $\Psi$ is sometimes used to model boundary measurements for optical tomography with diffusion approximation,
\cite{HSTFMT1994, HJWT1995, SAHD1995}.

We have the following proposition, which is closely related to the earlier boundary reconstruction results of a Riemannian metric from the Dirichlet--to--Neumann map  \cite{KohnVog_1984, LeeUhl89}.

\begin{prop} 
\label{prop_A-boundary}
 The knowledge of the map $\Psi$ given by \eqref{eq_PI} determines the values of $A$ on the boundary $\p \Omega$. 

\end{prop}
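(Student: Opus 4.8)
The plan is to follow the classical boundary-reconstruction strategy of \cite{KohnVog_1984, LeeUhl89}, transplanted from the Dirichlet-to-Neumann map to the interior-source-to-flux map $\Psi$. The first step is to pin down what boundary information $\Psi$ encodes. Write $L_{A}:=-\nabla\cdot(A\nabla\,\cdot\,)$, let $w^{g}$ denote the solution of $L_{A}w^{g}=0$ in $\Omega$ with $w^{g}|_{\partial\Omega}=g$, and let $P_{A}\colon g\mapsto w^{g}$ be the associated Poisson operator. For smooth $g$ and $F\in L^{2}(\Omega)$, integrating by parts in $\int_{\Omega}\bigl(w^{g}\,\nabla\cdot(A\nabla u^{F})-u^{F}\,\nabla\cdot(A\nabla w^{g})\bigr)\,dx$ and using the symmetry of $A$ together with $u^{F}|_{\partial\Omega}=0$, $L_{A}w^{g}=0$ and $L_{A}u^{F}=F$ gives
\[
\int_{\partial\Omega}(\Psi F)\,g\,dS=-\int_{\Omega}F\,(P_{A}g)\,dx .
\]
Hence $\Psi^{*}=-P_{A}$, and since $P_{A}$ maps $L^{2}(\partial\Omega)$ continuously into $H^{1/2}(\Omega)\subset L^{2}(\Omega)$, knowing $\Psi$ is the same as knowing the Poisson operator $P_{A}$, and hence the full Cauchy data on $\partial\Omega$ of every $L_{A}$-harmonic function in $\Omega$; in particular $\Psi$ determines the first-order boundary operator $N_{A}\colon g\mapsto\partial_{\nu}w^{g}|_{\partial\Omega}$ (equivalently, the asymptotics of the $L_{A}$-Poisson kernel near the boundary diagonal).

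Second, I would localise at a point $x_{0}\in\partial\Omega$. Fix boundary normal coordinates $(x',x_{n})$ near $x_{0}$, so that $\Omega=\{x_{n}>0\}$ locally and the coordinate change depends only on $\partial\Omega$; write $\widetilde A=(\widetilde a_{jk})$ for $A$ in these coordinates and split $\widetilde A(x_{0})$ into the tangential block $\widetilde A''=(\widetilde a_{jk})_{j,k<n}$, the mixed vector $b=(\widetilde a_{jn})_{j<n}$ and the scalar $\widetilde a_{nn}$. For fixed $\xi'\in\R^{n-1}\setminus\{0\}$ and a cut-off $\eta$ supported near $x_{0}$, apply $P_{A}$ to $g_{\tau}(x')=\eta(x')e^{i\tau x'\cdot\xi'}$ and construct a WKB/parametrix approximation
\[
w^{g_{\tau}}(x',x_{n})=\eta(x')\,e^{i\tau x'\cdot\xi'}\,e^{-\tau\mu(x_{0},\xi')\,x_{n}}\bigl(1+O(\tau^{-1})\bigr)\qquad\text{near }x_{0},
\]
where $\mu=\mu(x_{0},\xi')$ is the root with $\Re\mu>0$ of the indicial equation $\widetilde a_{nn}\mu^{2}-2i(b\cdot\xi')\mu-(\widetilde A''\xi'\cdot\xi')=0$; such a decaying root exists because $\widetilde A(x_{0})>0$. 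The remainder bound would come from the standard elliptic energy estimate for $L_{A}$ with the large parameter $\tau$. Since $\Psi$ delivers $w^{g_{\tau}}$ throughout $\Omega$, differentiating and letting $\tau\to\infty$ recovers
\[
\mu(x_{0},\xi')=-\lim_{\tau\to\infty}\tau^{-1}\,\frac{\partial_{n}w^{g_{\tau}}(x_{0},x_{n})\big|_{x_{n}=0}}{g_{\tau}(x_{0})},
\]
that is, the principal symbol of $N_{A}$ at $(x_{0},\xi')$.

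Third, one solves the indicial equation. As $\xi'$ varies, $\Im\mu(x_{0},\xi')=(b\cdot\xi')/\widetilde a_{nn}$ yields the linear form $\xi'\mapsto(b\cdot\xi')/\widetilde a_{nn}$, hence $b(x_{0})/\widetilde a_{nn}(x_{0})$, while $|\mu(x_{0},\xi')|^{2}=(\widetilde A''\xi'\cdot\xi')/\widetilde a_{nn}$ yields the quadratic form $\xi'\mapsto(\widetilde A''\xi'\cdot\xi')/\widetilde a_{nn}$, hence $\widetilde A''(x_{0})/\widetilde a_{nn}(x_{0})$. Together with the known coordinate change, this recovers $A(x_{0})$ up to the single positive scalar $\widetilde a_{nn}(x_{0})$. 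Fixing that last scalar is the step I expect to be the main obstacle: it is not seen by the principal symbol of $N_{A}$, since the indicial root $\mu(x_{0},\cdot)$ depends on $\widetilde A(x_{0})$ only through its conformal class, so a complete argument must bring in finer information — the next term in the parametrix for $P_{A}$, or a lower-order term in the full symbol of $N_{A}$. This is where the bulk of the work lies; once $\widetilde a_{nn}(x_{0})$, and hence $A(x_{0})$, is in hand, and since $x_{0}\in\partial\Omega$ was arbitrary, the values of $A$ on $\partial\Omega$ are determined, which is the assertion of the proposition.
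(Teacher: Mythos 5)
Your reduction of $\Psi$ to the Poisson operator via Green's identity is correct, and your WKB computation of the decaying root $\mu(x_0,\xi')$ extracts exactly the same information as the paper does by a different route: the paper constructs a right parametrix for $-\nabla\cdot(A\nabla u)=F$, $u|_{\p\Omega}=0$, and reads off the leading behaviour $e^{-y_n|\xi'|_A}$ of the Schwartz kernel of $\Psi$, where $y_n$ is the boundary normal coordinate of the metric $A^{-1}$; that exponential rate is your $\Re\mu$, so up to the point where you obtain $\widetilde A''/\widetilde a_{nn}$ and $b/\widetilde a_{nn}$ your argument matches the substance of the paper's.

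The gap you flag --- recovering the remaining scalar $\widetilde a_{nn}(x_0)$ --- is genuine, and you should be aware that it cannot be closed by ``the next term in the parametrix'': the map $\Psi$ is exactly invariant under $A\mapsto\lambda A$ for any constant $\lambda>0$. Indeed, if $-\nabla\cdot(\lambda A\nabla v)=F$ with $v|_{\p\Omega}=0$, then $v=\lambda^{-1}u^F$, so $\nu\cdot(\lambda A)\nabla v|_{\p\Omega}=\nu\cdot A\nabla u^F|_{\p\Omega}$, i.e. $\Psi_{\lambda A}=\Psi_A$ while $(\lambda A)|_{\p\Omega}\neq A|_{\p\Omega}$. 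Hence $\Psi$ can determine $A|_{\p\Omega}$ at best up to a global positive constant: your principal-symbol analysis correctly lands on scale-invariant quantities, and lower-order terms could at most fix the variation of $\widetilde a_{nn}$ along $\p\Omega$, never the overall constant. Note that the paper's own proof does not escape this either: the recovered exponent $y_n|\xi'|_A$ is itself invariant under $A\mapsto\lambda A$, since the normal coordinate $y_n$ of the unknown metric $A^{-1}$ scales like $\lambda^{-1/2}$ while $|\xi'|_A$ scales like $\lambda^{1/2}$, and the concluding step ``varying $\xi'$, we recover $A(x',0)$'' implicitly treats $y_n$ as known in terms of the Euclidean coordinates, which it is not. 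So your proposal is incomplete exactly where you say it is, but the obstruction you ran into is a genuine feature of the problem rather than a defect of your method; to make the proposition correct one must either weaken the conclusion to determination up to a constant factor or add a normalization of $A$.
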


\begin{proof}

We shall recover the values of $A$ on the boundary by analyzing the distribution kernel of the map $\Psi$, obtained by  constructing a right parametrix for the boundary value problem \eqref{eq_bvp_2}. 
Let us denote $\mathcal{A}=-\nabla\cdot (A\nabla )$. 
Since $A(x)$ is a positive definite matrix, smoothly depending on $x$, we can view $\overline{\Omega}$ as a Riemannian manifold with boundary, equipped with the metric $G=A^{-1}$, $G=(g_{ij})$, $1\le i,j\le n$.  To construct a parametrix for  \eqref{eq_bvp_2}, we shall work locally near a boundary point.  Let $x_0\in \p \Omega$ and  introduce the boundary normal coordinates $y=(y',y_n)\in U$, $y'=(y_1,\dots,y_{n-1})$, centered at $x_0$. Here $U$ stands for some open neighborhood of $0$ in $\R^{n}$.  In terms of the boundary normal coordinates, locally near $x_0$, the boundary $\p \Omega$ is defined by $y_n=0$, and $y_n>0$ if and only if $x\in \Omega$.  In what follows, we shall write again $(x',x_n)$ for the boundary normal coordinates. 

In the boundary normal coordinates, the metric $G$ has the form
\[
G=\sum_{\alpha,\beta=1}^{n-1} g_{\alpha\beta}(x) dx_{\alpha}dx_{\beta} +(dx_n)^2,
\]
see \cite{LeeUhl89}, and the principal symbol of the operator $\mathcal{A}$ is given by
\[
a_0(x,\xi)=\xi_n^2+\sum_{\alpha,\beta=1}^{n-1}g^{\alpha\beta}(x)\xi_{\alpha}\xi_{\beta}. 
\]
Therefore,  the equation $a_0(x,\xi',\xi_n)=0$, $\xi'=(\xi_1,\dots,\xi_{n-1})$, has the solutions,
\begin{equation}
\label{eq_roots_lambda}
\xi_n=\lambda_\pm(x,\xi'),\quad \lambda_\pm(x,\xi')=\pm i 
\sqrt{\sum_{\alpha,\beta=1}^{n-1}g^{\alpha\beta}(x)\xi_{\alpha}\xi_{\beta}}. 
\end{equation}

We can view $\mathcal{A}$ as a linear continuous map in the space $\mathcal{D}'(U)$. 
In the boundary normal coordinates, the problem \eqref{eq_bvp_2} has the following form,
\begin{equation}
\label{eq_bvp_3}
\begin{aligned}
&\mathcal{A}u=F\quad\text{in}\quad \R^n_+=\{x\in\R^n:x_n>0\},\\
&u|_{x_n=0}=0. 
\end{aligned}
\end{equation}

Let 
\[
r_0(x,\xi)=a_0(x,\xi)^{-1}(1-\chi(\xi)), \quad x\in U,\quad \xi\in \R^n,
\]
where $\chi(\xi)\in C_0^\infty(\R^n)$, $\chi(\xi)=0$ for $|\xi|\ge 1$ and $\chi=1$ near $0$. The operator $\textrm{Op}(r_0)$ is a rough parametrix for the operator $\mathcal{A}$, which will be sufficient  for our purposes.  Here  we are using the classical quantization of a symbol $a\in S^k(U\times \R^n)$, which is given by 
\[
\textrm{Op}(a)u(x)=\frac{1}{(2\pi)^n}\int_{\R^n}\!\! \int_{\R^n} e^{i(x-y)\cdot \xi}a(x,\xi) u(y)dyd\xi.
\]
As usual, we say that $a\in S^k(U\times\R^n)$ if  locally uniformly in $x\in U$, we have
\[
|\p_x^\alpha \p_\xi^\beta a(x,\xi)|\le C_{\alpha\beta} \langle \xi\rangle^{k-|\beta|},\quad  \langle \xi\rangle=\sqrt{1+|\xi|^2}.
\]

Let $r_+$ be the operation of restriction from $\R^n$ to $\R_+^n$ and let $e_+$ be the operation of extension by zero from $\R_+^n$ to $\R^n$.

We shall construct  a right parametrix for the boundary value problem \eqref{eq_bvp_3} in the following form
\[
R(F)=r_+\textrm{Op}(r_0)(e_+ F)+ R_b(\psi).
\]
Here  
\[
\psi(x')=-\tau_0 r_+\textrm{Op}(r_0)(e_+ F), \quad \tau_0:u\mapsto u|_{x_n=0},
\]
and $R_b$ will be constructed as a right parametrix for the boundary value problem
\begin{equation}
\label{eq_bvp_4_new}
\begin{aligned}
&\mathcal{A}u=0\quad\text{in}\quad \R^n_+,\\
&u|_{x_n=0}=\psi(x'). 
\end{aligned}
\end{equation}
In what follows we shall suppress the operator $r_+$ from the notation, as this will cause no confusion.  

When constructing the operator $R_b$, we shall follow the standard approach in the theory of elliptic boundary value problems, see \cite{ChazPir_book}.
To this end,  let $\tilde \chi\in C^\infty_0(\R^{n-1})$ be such that  $\tilde \chi=1$ for $|\xi'|\le 1$. Notice that 
\[
(1-\tilde \chi(\xi'))\chi(\xi)=0.
\] 
Let $\sigma=\sigma(x,\xi')$ be a simple closed $C^1$ smooth curve in the upper half-plane $\Im \xi_n>0$,  which encircles the root $\lambda_+(x,\xi')$ in the positive sense. In what follows we may and will choose $\sigma$ so that it is independent of $x\in U$, depending on $\xi'$ only, i.e. $\sigma=\sigma(\xi')$.
When $\varphi\in C^\infty_0(U\cap\R^{n-1})$, we define the operator 
\begin{equation}
\label{eq_I_1_new}
(\Pi\varphi)(x)=\frac{1}{i(2\pi)^n }\int_{\R^{n-1}}e^{ix'\cdot\xi'} \hat \varphi(\xi') (1-\tilde \chi(\xi'))\bigg(\int_{\xi_n\in \sigma}\frac{e^{ix_n\xi_n}}{a_0(x,\xi)} d\xi_n\bigg) d \xi',\quad x_n\ge 0,
\end{equation}
where 
\[
\hat \varphi(\xi')=\int_{\R^{n-1}} e^{-iy'\cdot \xi'}\varphi(y')dy'
\]
is the Fourier transform of $\varphi$. 
By a contour deformation argument in the complex $\xi_n$--plane, we have
\[
\Pi\varphi=\frac{1}{i}\textrm{Op}((1-\tilde \chi)a_0^{-1})(\varphi\otimes \delta_{x_n=0}),\quad x_n>0.
\]
We get therefore,
\[
\mathcal{A}\Pi\varphi=\textrm{Op}(b)(\varphi\otimes \delta_{x_n=0}), \quad b\in S^{-1}(U\times\R^n), \quad x_n>0, 
\]
since the operator $\mathcal{A}$ is local.

We shall take $R_b(\psi)=\Pi \varphi$, for some function $\varphi$, defined locally near $0\in \R^{n-1}$, to be found  from the boundary condition, i.e.
\begin{equation}
\label{eq_bound_con_1}
\tau_0\Pi \varphi=\psi.
\end{equation}
To this end, 
we shall now prove that $\tau_0\Pi$ is an elliptic pseudodifferential operator on the boundary and compute its principal symbol. 
 By the residue calculus, using \eqref{eq_I_1_new}, we get
\begin{align*}
\tau_0 \Pi=\textrm{Op}(d_0),\quad d_0(x',\xi')=(1-\tilde \chi(\xi'))\frac{1}{2\pi i}\int_{\xi_n\in \sigma} \frac{1}{a_0(x',0,\xi)}d\xi_n\\
=(1-\tilde \chi(\xi'))\frac{1}{\p_{\xi_n}a_0(x',0,\xi',\lambda_+)}=(1-\tilde \chi(\xi'))\frac{1}{2i |\xi'|_A }\in S^{-1}((U\cap \R^{n-1})\times\R^{n-1}),
\end{align*}
where 
\[
|\xi'|_{A}=\sqrt{\sum_{\alpha,\beta=1}^{n-1}g^{\alpha\beta}(x',0)\xi_{\alpha}\xi_{\beta}}.
\]
We introduce next a rough parametrix of $\tau_0\Pi$, given by  $\textrm{Op}(\tilde d_0)$, where $\tilde d_0
\in S^1((U\cap \R^{n-1})\times\R^{n-1})$ is such that
\[
\tilde d_0=2i |\xi'|_A,\quad \textrm{for } |\xi'| \textrm{ large}.
\]
To satisfy  \eqref{eq_bound_con_1}, we choose  
\[
\varphi=\textrm{Op}(\tilde d_0)\psi=-\textrm{Op}(\tilde d_0)(\tau_0\textrm{Op}(r_0)(e_+ F)). 
\]
This choice of $\varphi$ completes the construction of  a rough parametrix for the boundary value problem \eqref{eq_bvp_4_new}, given by
\[
R_b(\psi)=-\Pi\textrm{Op}(\tilde d_0)(\tau_0\textrm{Op}(r_0)(e_+ F)).
\]
Hence, the parametrix for the problem \eqref{eq_bvp_3} has the form
\[
R(F)=\textrm{Op}(r_0)(e_+ F)-\Pi\textrm{Op}(\tilde d_0)(\tau_0\textrm{Op}(r_0)(e_+ F)). 
\]

In the boundary normal coordinates, the operator $\Psi$ is given by
\[
\Psi(F)=\tau_0 \p_{x_n} u,
\]
and therefore,  to obtain the claim of the proposition it suffices to analyze  the distribution kernel $K(x',y)$ of the operator 
$\tau_0 \p_{x_n} R$ given by
\[
(\tau_0 \p_{x_n} R(F))(x')=\int_{\R^n}K(x',y)F(y)dy,\quad x'\in \R^{n-1},\quad y\in \R^n.
\]
Let us first consider the Schwartz kernel of the operator
\begin{align*}
\tau_0 \p_{x_n} \textrm{Op}(r_0)(e_+ F)=\frac{1}{(2\pi)^n}\tau_0\p_{x_n} \int_{\R^n} e^{ix\cdot \xi}r_0(x,\xi)\hat{ e_+F}(\xi)d\xi,
\end{align*}
which 
is given by
\[
K^{(1)}(x',y)=\frac{1}{(2\pi)^n}\int_{\R^{n}} e^{i(x'-y')\cdot\xi'}e^{-iy_n\xi_n}(i\xi_nr_0(x',0,\xi)+\p_{x_n}r_0(x',0,\xi))d\xi.
\]
Recall that here $y_n> 0$. Restricting the attention to the region  $|\xi'|\ge 1$, by a contour deformation argument to the lower half plane, we find that 
\[
\int_{\R}e^{-iy_n\xi_n}i\xi_nr_0(x',0,\xi)d\xi_n=2\pi i
e^{-iy_n\lambda_-}\frac{i\lambda_-}{\p_{\xi_n}a_0(x',0,\xi',\lambda_-)}
=\pi e^{-y_n |\xi'|_A}, 
\]
and therefore,
\[
K^{(1)}(x',y)=\frac{1}{(2\pi)^{n-1}}\int_{\R^{n-1}} e^{i(x'-y')\cdot\xi'}  t(x',y_n,\xi') d\xi',
\]
where 
\[
t(x',y_n,\xi') =
\frac{e^{-y_n |\xi'|_A}}{2}+ e^{-y_n |\xi'|_A}\mathcal{O}\bigg(\frac{1}{|\xi'|_A}\bigg),\quad |\xi'| \ge 1. 
\]

Next,  the operator
$\tau_0\p_{x_n}\Pi$ is a pseudodifferential operator on $\R^{n-1}$, given by
\begin{align*}
&(\tau_0 \p_{x_n}\Pi) v(x')
=\frac{1}{(2\pi)^{n-1}}\\
&\int_{\xi'\in\R^{n-1}}(1-\tilde \chi(\xi')) e^{ix'\cdot \xi'}\hat v(\xi')\frac{1}{2\pi i}\int_{\xi_n\in \sigma}\bigg( \frac{i\xi_n }{a_0(x',0,\xi)} +\p_{x_n}\bigg(\frac{1}{a_0(x',0,\xi)}\bigg)\bigg)d\xi'd\xi_n.
\end{align*}
The principal symbol of the operator $\tau_0\p_{x_n}\Pi$ is therefore
\[
\frac{1}{2\pi i}\int_{\xi_n\in \sigma}  \frac{i\xi_n }{a_0(x',0,\xi)}d\xi_n=i\frac{\lambda_+}{\p_{\xi_n}a_0(x',0,\xi',\lambda_+)}= \frac{i}{2},\ |\xi'| \textrm{ large enough}.
\]
The operator 
$\textrm{Op}(\tilde d_0)$ is also a pseudodifferential operator on $\R^{n-1}$ and its principal symbol is given by
$2\lambda_+\in S^{1}((U\cap \R^{n-1})\times\R^{n-1})$,  $|\xi'|$ large enough. Hence, the principal symbol of the operator  $\tau_0\p_{x_n}\Pi \textrm{Op}(\tilde d_0)$ is $i\lambda_+\in S^{1}((U\cap \R^{n-1})\times\R^{n-1})$, and therefore, its kernel is given by
\begin{align*}
K_1^{(2)}(x',z')&=\frac{1}{(2\pi)^{n-1}}\int_{\R^{n-1}} e^{i(x'-z')\cdot \xi'}d_1(x',\xi')d\xi',\\
d_1(x',\xi')&=i\lambda_+(x',0,\xi')+p_{0}(x',\xi'), \quad |\xi'| \textrm{ large enough},
\end{align*}
where $p_{0}\in S^{0}((U\cap \R^{n-1})\times \R^{n-1})$.  

Finally, the kernel of the operator $\tau_0\textrm{Op}(r_0)$ is given by
\begin{align*}
K_2^{(2)}(z',y)&=\frac{1}{(2\pi)^n}\int_{\R^n} e^{i(z'-y')\cdot\eta'}e^{-iy_n\eta_n}r_0(z',0,\eta)d\eta\\
&=\frac{1}{(2\pi)^{n-1}}\int_{\R^{n-1}} e^{i(z'-y')\cdot\eta'} d_2(z',y_n,\eta')d\eta',\\
d_2(z',y_n,\eta')&=\frac{1}{2}\frac{e^{-y_n |\eta'|_A}}{|\eta'|_A},\quad |\eta'| \textrm{ large enough}.
\end{align*}
Here as usual we use the residue calculus, where only the pole in the lower half plane contributes. 

Hence, the kernel of the composition $\tau_0\p_{x_n}\Pi \textrm{Op}(\tilde d_0)\tau_0\textrm{Op}(r_0)(e_+ F)$ is given by
\begin{align*}
K^{(2)}(x',y)&=\int K_1^{(2)}(x',z')K_2^{(2)}(z',y)dz'\\
&=\frac{1}{(2\pi)^{2(n-1)}}\int\!\!\!\int\!\!\!\int e^{i(x'-z')\cdot \xi'} e^{i(z'-y')\cdot \eta'}  d_1(x',\xi')d_2(z',y_n,\eta')d\xi' d\eta' dz'\\
&=
\frac{1}{(2\pi)^{n-1}}\int e^{i(x'-y')\cdot \eta'} c(x',y_n,\eta')d\eta',
\end{align*}
where 
\begin{align*}
c(x',y_n,\eta')=\frac{1}{(2\pi)^{n-1}}\int\!\!\!\int e^{i(x'-z')\cdot(\xi'-\eta')}d_1(x',\xi')d_2(z',y_n,\eta')d\xi' dz'.
\end{align*}
Here $y_n\ge 0$ occurs as a parameter. 
Now $d_1(x',\xi')\in S^{1}((U\cap \R^{n-1})\times\R^{n-1})$ for large $|\xi'|$ and $d_2(z',y_n,\eta')$
satisfies
\[
|\p_{z'}^{\alpha}\p_{\eta'}^\beta \p_{y_n}^{\gamma} d_2(z',y_n,\eta')|\le C_{\alpha,\beta,\gamma}e^{-y_n\langle \eta'\rangle} \langle \eta'\rangle^{-1-|\beta|+|\gamma|},
\]
where $\langle \eta'\rangle=\sqrt{1+|\eta'|_A^2}$ is large enough. Hence, $c(x',y_n,\eta')$, depending on the parameter $y_n$,  is the symbol of the composition of two pseudodifferential operators in the tangential directions. By the standard results on pseudodifferential operators, see 
\cite{Grigis_Sjostrand_book}, it has the following asymptotic expansion,
\[
c(x',y_n,\eta')\sim \sum_{|\alpha|\ge 0} \frac{1}{\alpha!} \p_{\eta'}^\alpha d_1(x',\eta') D_{x'}^\alpha d_2(x',y_n,\eta'),
\]
with the leading term $d_1(x',\eta')d_2(x',y_n,\eta')$. 
The knowledge of the operator $\Psi$ implies  the knowledge of the kernel $K^{(1)}(x',y)-K^{(2)}(x',y)$ for any $x'\in U\cap \R^{n-1}$ and $y\in U\cap \overline{\R_+^n}$. This implies the knowledge of 
\[
\frac{e^{-y_n |\xi'|_A}}{2}+ e^{-y_n |\xi'|_A} \mathcal{O}\bigg(\frac{1}{|\xi'|_A}\bigg)-c(x',y_n,\xi'),\quad \textrm{for any } |\xi'|\textrm{ large enough}.
\]
The leading term of the latter expression is given by 
\[
\frac{e^{-y_n |\xi'|_A}}{2}+\frac{e^{-y_n |\xi'|_A}}{2}=e^{-y_n |\xi'|_A}.
\]
Varying $\xi'$,  we recover $A(x',0)$. The proof is complete.  

\end{proof}

\end{appendix}

\section*{Acknowledgements}  
The research of K.K. was financially supported by the
Academy of Finland (project 125599).  The research of M.L. and S.S. was financially supported by the Academy of Finland (Center of Excellence programme 213476 and Computational Science Research Programme, project 134868). 
This project  was partially conducted at  the
Mathematical Sciences Research Institute,  Berkeley, whose hospitality is gratefully acknowledged.

\end{document}